\newcommand{\level}{\mathrm{level}}
\newcommand{\depth}{\mathrm{depth}}
\providecommand{\U}[1]{\protect\rule{.1in}{.1in}}
\newtheorem{theorem}{Theorem}[section]
\newtheorem{corollary}[theorem]{Corollary}
\newtheorem{definition}[theorem]{Definition}
\newtheorem{example}[theorem]{Example}
\newtheorem{lemma}[theorem]{Lemma}
\newtheorem{proposition}[theorem]{Proposition}
\newtheorem{remark}[theorem]{Remark}
\newenvironment{proof}[1][Proof]{\noindent \emph{#1.} }{\hfill \ \rule{0.5em}{0.5em}}
\makeatletter\@addtoreset{equation}{section}\makeatother
\makeatletter\@addtoreset{figure}{section}\makeatother
\makeatletter\@addtoreset{table}{section}\makeatother
\begin{document}

\title{Geometry of tree-based tensor formats in tensor Banach spaces
}
\author{Antonio Falc\'{o}\footnote{Corresponding author}$^{\,\,1},$ Wolfgang Hackbusch$^{2}$ and Anthony Nouy$%
^{3}$ \\
$^{1}$ ESI International Chair@CEU-UCH, \\
Departamento de Matem\'aticas, F\'{\i}sica y Ciencias
Tecnol\'ogicas,\\
Universidad Cardenal Herrera-CEU, CEU Universities \\
San Bartolom\'e 55,
46115 Alfara del Patriarca (Valencia), Spain\\
e-mail: \texttt{afalco@uchceu.es}\\
$^{2}$ Max-Planck-Institut \emph{Mathematik in den Naturwissenschaften}\\
Inselstr. 22, D-04103 Leipzig, Germany \\
e-mail: \texttt{{wh@mis.mpg.de} }\\
$^{3}$ Centrale Nantes, Nantes Université, \\
LMJL UMR CNRS 6629\\
1 rue de la No\"e,
44321 Nantes Cedex 3, France.\\
e-mail: \texttt{anthony.nouy@ec-nantes.fr}}
\date{}
\maketitle

\begin{abstract}
In the paper \emph{`On the Dirac-Frenkel Variational Principle on Tensor Banach Spaces'}, we provided a geometrical description of manifolds of tensors in Tucker format with fixed multilinear (or Tucker) rank in tensor Banach spaces, that allowed to extend the Dirac-Frenkel variational principle in the framework of topological tensor spaces. 
The purpose of this note is to extend these results to more general tensor formats. More precisely, we provide a new geometrical description of manifolds of tensors in tree-based (or hierarchical) format, also known as tree tensor networks, which are intersections of manifolds of tensors in Tucker format associated with different partitions of the set of dimensions. The proposed geometrical description of tensors in tree-based format is compatible with the one of manifolds of tensors in Tucker format. 
\end{abstract}


\noindent\emph{2010 AMS Subject Classifications:} 15A69, 46B28, 46A32.

\noindent \emph{Key words:} Tensor spaces, Banach manifolds, Tensor formats, Tree-based tensors, Tree tensor networks.

\newpage

\section{Introduction}

Tensor methods are prominent tools in a wide range of applications involving high-dimensional data or functions.  The exploitation of low-rank structures of tensors is the basis of many approximation or dimension reduction methods, see the surveys \cite{KOL09,Bachmayr2016,nouy:2017_morbook, nouy2017handbook,cichocki2016tensor1,cichocki2017tensor2} and monograph \cite{Hackbusch}. Providing a geometrical description of sets of low-rank tensors has many interests. In particular, it allows to devise robust algorithms for optimization \cite{AMS,Uschmajew2020} or construct reduced order models for dynamical systems \cite{KoLu1}. 

A basic low-rank tensor format is the Tucker format. 
Given a collection of $d$ vector spaces $V_\nu$, $\nu\in D := \{1,\hdots,d\}$,
 and the corresponding algebraic tensor space $\mathbf{V}_D = V_1 \otimes \hdots \otimes V_d,$ the set of tensors $\mathfrak{M}_{\mathfrak{r}}(\mathbf{V}_D)$ of tensors  in Tucker format with rank $\mathfrak{r} = (r_1,\hdots,r_d)$ is the set of tensors $\mathbf{v}$ in $\mathbf{V}_D$ such that  $
 \mathbf{v} \in U_1\otimes \hdots \otimes U_d 
 $
 for some subspaces $U_\nu $ in the Grassmann manifold $\mathbb{G}_{r_\nu}(V_\nu)$ of $r_\nu$-dimensional spaces in $V_\nu.$
A geometrical description of $\mathfrak{M}_{\mathfrak{r}}(\mathbf{V}_D)$ has been introduced in \cite{FHN}, providing this set the structure of a $C^\infty$-Banach manifold. 
Tree-based tensor formats \cite{Falco2018SEMA}, also known as tree tensor networks in physics or data science \cite{Orus2019,stoudenmire2016supervised,grelier:2018,Michel2020},
 are more general low-rank tensor formats, also based on subspaces. They include the hierarchical format \cite{HaKuehn2009} or the tensor-train format \cite{Osedelets1}. 
Sets of tensors in tree-based tensor format are the intersection of a collection of sets of tensors in Tucker format associated with a hierarchy of partitions given by a tree. More precisely, given a tree $T_D$ over $D$ (see Definition~\ref{partition_tree} below for a more precise description), we can define a sequence of partitions $\mathcal{P}_1, \hdots , \mathcal{P}_L$ of $D$, with $L$ the depth of the tree, such that each element  in $\mathcal{P}_k$ is a subset of an element of $ \mathcal{P}_{k-1}$ (see example in Figure \ref{figintro}).
For each  partition ${\mathcal{P}_k}$, a tensor in $\mathbf{V}_D$ can be identified with a tensor in $\mathbf{V}_{\mathcal{P}_k} := \bigotimes_{\alpha\in \mathcal{P}_k} \mathbf{V}_{\alpha}$, that allows to define  manifolds  of tensors in Tucker format $\mathfrak{M}_{\mathfrak{r}_k}(\mathbf{V}_{\mathcal{P}_k})$ with $\mathfrak{r}_k \in \mathbb{N}^{\# \mathcal{P}_k}$.  
The set $\mathcal{FT}_{\mathfrak{r}}(\mathbf{V}_D)$ of tensors in $\mathbf{V}_D$ with tree-based rank $\mathfrak{r} = (r_\alpha)_{\alpha \in T_D} \in \mathbb{N}^{\#T_D}$ is then given by 
$$
\mathcal{FT}_{\mathfrak{r}}(\mathbf{V}_D) = \bigcap_{k=1}^L \mathfrak{M}_{\mathfrak{r}_k}(\mathbf{V}_{\mathcal{P}_k})
$$
where $\mathfrak{r}_k = (r_\alpha)_{\alpha\in \mathcal{P}_k}$.
 \begin{figure}[h]
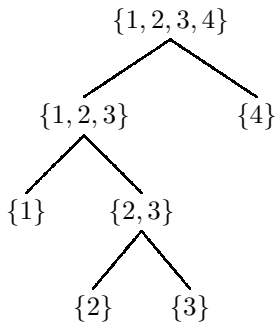

\centering
\synttree[$\{1,2,3,4\}$[$\{1,2,3\}$[$\{1\}$][$\{2,3\}$[$\{2\}$][$\{3\}$]]]
[$\{4\}$]]
\caption{A tree over $D = \{1,2,3,4\}$, with depth $L=3$, and the associated partitions of $D$: $\mathcal{P}_3 = \{\{1\},\{2\},\{3\},\{4\}\}$, $\mathcal{P}_2 = \{\{1\},\{2,3\},\{4\}\}$, $\mathcal{P}_1=\{\{1,2,3\},\{4\}\}$.
}\label{figintro}
\end{figure}

In this paper, we provide a new geometrical description of the sets $\mathcal{FT}_{\mathfrak{r}}(\mathbf{V}_D)$ of tensors with fixed tree-based rank in tensor Banach spaces. This description is compatible with the one of manifolds $ \mathfrak{M}_{\mathfrak{r}_k}(\mathbf{V}_{\mathcal{P}_k})$  
introduced in \cite{FHN}. It is different from the ones from \cite{Uschmajew2012} and \cite{HRS}, respectively introduced for hierarchical and tensor train formats in finite-dimensional tensor spaces. It is also different from the one introduced by the authors in \cite{FHN2015}, that  provided a different chart system. The present geometrical description is more natural and we believe that it is more amenable to understand the geometry and topology of the different tensor formats based on subspaces.
With the present description, and under similar assumptions on the norms of tensor spaces, Theorem 5.2 and Theorem 5.4 in \cite{FHN} also hold for tree-based tensor formats,  that allows to extend the Dirac-Frenkel variational principle for tree-based tensor formats in tensor Banach spaces. 
\\\par 

The outline of this note is as follows. We start in section \ref{sec:prelim} by recalling results from \cite{FHN}. Then in sections\ref{Sec_Hierar} we introduce a description of tree-based tensor formats $\mathcal{FT}_{\mathfrak{r}}(\mathbf{V}_D)$ as an intersection of Tucker formats. Section~\ref{Tucker} is devoted to the geometrical description of manifolds $\mathfrak{M}_{\mathfrak{r}}(\mathbf{V}_D)$ of tensors in Tucker format with fixed rank. Finally in section~\ref{sec:banach_manifold_tucker_fixed_rank}, we introduce the new geometrical description of the manifold $\mathcal{FT}_{\mathfrak{r}}(\mathbf{V}_D)$ of tensors in tree-based tensor format with fixed tree-based. We prove that it is an immersed manifold in the ambient tensor Banach space and outline the extension of the Dirac-Frenkel variational principle for tree-based tensor formats in tensor Banach spaces.

\section{Preliminary results}\label{sec:prelim}

Let $D:=\{1,\ldots,d\}$ be a finite index set and consider  
an algebraic tensor space  $ \mathbf{V}_D = \bigotimes
_{\alpha \in D} V_{\alpha}$ generated from vector spaces $V_{\alpha}$, $\alpha \in D$. 
Concerning the definition of the algebraic tensor space we refer to Greub \cite{Greub}.
For any partition $\mathcal{P}_D$ of $D$, the algebraic tensor space $ \mathbf{V}_D $ can be identified with an algebraic tensor space generated from 
vector spaces $\mathbf{V}_{\alpha}$, $\alpha \in \mathcal{P}_D$. Indeed, for any partition $\mathcal{P}_D$ of $D,$ the equality   
$$
\mathbf{V}_D = \left. \bigotimes_{\alpha \in \mathcal{P}_D} \mathbf{V}_{\alpha} \right.
$$ 
holds, with  $\mathbf{V}_{\alpha} := \left. \bigotimes_{j \in \alpha} V_j \right.$ if $\alpha \neq \{j\},$ for some $j \in D,$ or  
$\mathbf{V}_{\alpha}=V_j$ if $\alpha = \{j\}$ for some $j \in D.$ Next we identify $D$ with the trivial partition 
$\{\{1\},\{2\},\ldots,\{d\}\}.$
\begin{remark}
In \cite{FHN}, we considered  the tensor space $ \mathbf{V}_D = \bigotimes
_{\alpha \in D} V_{\alpha}$ for a given $D$. 
It is not difficult to check that the results from \cite{FHN} remain true when substituting $D$ by any partition $\mathcal{P}_D$, that includes the initial case by identifying  $D$ with the trivial partition 
$\{\{1\},\{2\},\ldots,\{d\}\}.$
More precisely, we can substitute with minor changes along the paper ``$\alpha \in D$'' by ``$\alpha \in \mathcal{P}_D$''.
\end{remark}
Before restating Theorem 3.17 of \cite{FHN} in the present framework, we recall some 
definitions from \cite{FHN}. 

\bigskip

Let $X$ and $Y$ be Banach spaces. We denote by $\mathcal{L}(X,Y)$  
the space of continuous linear mappings from 
$X$ into $Y.$ The corresponding operator norm is written as $\left\Vert
\cdot\right\Vert _{Y\leftarrow X}.$ It is well known
that if $Y$ is a Banach space then $(\mathcal{L}(X,Y),\|\cdot\|_{Y\leftarrow X})$
is also a Banach space.

\bigskip

Let $X$ be
a Banach space. We denote by $\mathbb{G}(X)$ the Grassmann manifold of closed subspaces in $X$ (see Section 2 in \cite{FHN}). More precisely, we say that $U\in \mathbb{G}(X)$ holds if and only if $U$ is a closed subspace in $X$ and there exists a closed subspace $W$ in $X$ such that $X=U\oplus W.$ Every 
finite-dimensional subspace of $X$ belongs to $\mathbb{G}(X),$ and we denote by $\mathbb{G}_{n}(X)$ 
the space of all $n$-dimensional subspaces of $X$ $(n\geq 0).$ From Proposition~2.11 in \cite{FHN}, the Banach space $\mathcal{L}(U,W)$ can be identified with an element of $\mathbb{G}(\mathcal{L}(X,X)).$ Hence it is a closed subspace of $\mathcal{L}(X,X).$

\bigskip

Assume that $\mathcal{P}_D$ is a partition of $D$ and 
$(\mathbf{V}_{\alpha},\|\cdot\|_{\alpha})$ is a normed space 
for each $\alpha \in \mathcal{P}_D.$  Following \cite{FALHACK}, 
it is possible to construct for each $\alpha \in \mathcal{P}_D$ a map
$$
U_{\alpha}^{\min}:\mathbb{V}_D \longrightarrow \mathbb{G}(\mathbf{V}_{\alpha}), \quad \mathbf{v} \mapsto U_{\alpha}^{\min}(\mathbf{v})
$$
which satisfies the following properties:
\begin{enumerate}
\item[i)] $\dim U_{\alpha}^{\min}(\mathbf{v}) < \infty,$ for all $\mathbf{v} \in \mathbb{V}_D.$
\item[ii)] $\mathbf{v} \in \left. \bigotimes_{\alpha \in \mathcal{P}_D} U_{\alpha}^{\min}(\mathbf{v})\right.$ and if
there exist subspaces $\mathbf{U}_{\alpha} \subset \mathbf{V}_{\alpha}$ for each $\alpha\in \mathcal{P}_D$ such that $\mathbf{v} \in  \left. \bigotimes_{\alpha \in \mathcal{P}_D} \mathbf{U}_{\alpha}\right.,$ then
$U_{\alpha}^{\min}(\mathbf{v}) \subset \mathbf{U}_{\alpha}$ for each $\alpha \in \mathcal{P}_D.$ 
\end{enumerate}
The linear subspace $U_{\alpha}^{\min}(\mathbf{v})$ is called a minimal subspace of $\mathbf{v}$ in $\mathbf{V}_D.$ 
In consequence, given a fixed partition $\mathcal{P}_D$ of $D,$ we can define for each $\mathbf{v} \in \mathbf{V}_D$
its $\alpha$-rank as $\dim U_{\alpha}^{\min}(\mathbf{v})$ for $\alpha \in \mathcal{P}_D.$ The $\mathcal{P}_D$-rank for each  $\mathbf{v} \in \mathbf{V}_D$ is given by the tuple $(\dim U_{\alpha}^{\min}(\mathbf{v}))_{\alpha \in \mathcal{P}_D} \in \mathbb{N}^{\#\mathcal{P}_D}.$

\bigskip

Given $\mathfrak{r}=(r_{\alpha})_{\alpha \in \mathcal{P}_D} \in \mathbb{N}^{\#\mathcal{P}_D},$ we define the set of tensors in $\mathbf{V}_{D}$ represented in Tucker format with a fixed rank $\mathfrak{r}$ as
$$
\mathfrak{M}_{\mathfrak{r}}(\mathbf{V}_{D})=\left\{
\mathbf{v} \in \mathbf{V}_{D}: \dim U_{\alpha}^{\min}(\mathbf{v}) = r_{\alpha} \text{ for each } \alpha \in \mathcal{P}_D
\right\}.
$$
A tensor 
$\mathbf{v} \in \mathfrak{M}_{\mathfrak{r}}(%
\mathbf{V}_{D})$ if and only if for each $\alpha \in \mathcal{P}_D$ there exists a unique subspace $U_{\alpha}^{\min}(\mathbf{v})  \in \mathbb{G}_{r_{\alpha}}(\mathbf{V}_{\alpha})$
such that $\mathbf{v} \in \left. \bigotimes_{\alpha \in \mathcal{P}_D}U_{\alpha}^{\min}(\mathbf{v}) \right..$ 
Observe, that
$$
\mathfrak{M}_{\mathfrak{r}}\left(\left. \bigotimes_{\alpha \in \mathcal{P}_D}U_{\alpha}^{\min}(\mathbf{v}) \right.\right)=\left\{
\mathbf{v} \in \left. \bigotimes_{\alpha \in \mathcal{P}_D}U_{\alpha}^{\min}(\mathbf{v}) \right.: \dim U_{\alpha}^{\min}(\mathbf{v}) = r_{\alpha} \text{ for each } \alpha \in \mathcal{P}_D
\right\}
$$
is the set of full rank tensors in the finite dimensional space $\left. \bigotimes_{\alpha \in \mathcal{P}_D}U_{\alpha}^{\min}(\mathbf{v}) \right..$ Clearly, $\left. \bigotimes_{\alpha \in \mathcal{P}_D}U_{\alpha}^{\min}(\mathbf{v}) \right.$ is also a normed space and it can be shown that $\mathfrak{M}_{\mathfrak{r}}\left(\left. \bigotimes_{\alpha \in \mathcal{P}_D}U_{\alpha}^{\min}(\mathbf{v}) \right.\right)$ is an open set in $\left. \bigotimes_{\alpha \in \mathcal{P}_D}U_{\alpha}^{\min}(\mathbf{v})\right.,$ and hence a manifold.

\bigskip

Recall that for each fixed $\alpha \in \mathcal{P}_D,$ the 
finite dimensional vector space $U_{\alpha}^{\min}(\mathbf{v})$ is linearly isomorphic 
to the vector space $$ \mathbb{R}^{\dim U_{\alpha}(\mathbf{v})} = \mathbb{R}^{r_{\alpha}}$$ for all $\mathbf{v} \in \mathfrak{M}_{\mathfrak{r}}(\mathbf{V}_{D}).$
Hence the finite dimensional vector space $\left. \bigotimes_{\alpha \in \mathcal{P}_D}U_{\alpha}^{\min}(\mathbf{v})\right.$ is linearly isomorphic to the vector space $\mathbb{R}^{
\mathop{\mathchoice{\raise-0.22em\hbox{\huge $\times$}}
{\raise-0.05em\hbox{\Large $\times$}}{\hbox{\large
$\times$}}{\times}}_{\alpha \in \mathcal{P}_D}r_{\alpha}}.$ 
This fact allows to identify the open set of full rank tensors 
in $\mathbb{R}^{\mathop{\mathchoice{\raise-0.22em\hbox{\huge $\times$}}
{\raise-0.05em\hbox{\Large $\times$}}{\hbox{\large
$\times$}}{\times}}_{\alpha \in \mathcal{P}_D}r_{\alpha}},$ denoted by $\mathbb{R}_*^{
\mathop{\mathchoice{\raise-0.22em\hbox{\huge $\times$}}
{\raise-0.05em\hbox{\Large $\times$}}{\hbox{\large
$\times$}}{\times}}_{\alpha \in \mathcal{P}_D}r_{\alpha}},$ with $\mathfrak{M}_{\mathfrak{r}}\left(\left. \bigotimes_{\alpha \in \mathcal{P}_D}U_{\alpha}^{\min}(\mathbf{v}) \right.\right).$

\section{The set of tensors in tree-based format with fixed tree-based rank}
\label{Sec_Hierar}

To introduce the set of tensors in tree-based format with fixed tree-based rank we shall
use the minimal subspaces, in particular, Proposition~2.6 in \cite{FHN} (see also \cite{FALHACK} or \cite{Hackbusch}). Let $\mathcal{P}_D$ be a given partition of $D.$ By definition of the minimal subspaces $U_{\alpha}^{\min}(\mathbf{v}),$ $\alpha \in \mathcal{P}_D$, we have 
$$
\mathbf{v} \in \left. \bigotimes_{\alpha \in \mathcal{P}_D} U_{\alpha}^{\min}(\mathbf{v}%
)\right..$$ For a given $\alpha\in \mathcal{P}_D$ with $\#\alpha \ge 2$ and any partition $\mathcal{P}_{\alpha}$ of $\alpha$, we also have 
\begin{equation*}
 \mathbf{v} \in \left(\left. \bigotimes_{\beta \in \mathcal{P}_{\alpha}}
U_{\beta}^{\min}(\mathbf{v}) \right. \right) \otimes \left(\left.
\bigotimes_{\delta \in \mathcal{P}_D\setminus \{\alpha\}} U_{\delta}^{\min}(\mathbf{v}) \right. \right).
\end{equation*}
Given $D$ we will denote its power set (the set of all subsets of $D$) by $2^D.$ We recall a useful result on the relation between minimal subspaces (see Section 2 in \cite{FALHACK}).
\begin{proposition}\label{inclusin_Umin} 
For any $\alpha \in 2^D$ with $\#\alpha \ge 2$ and any partition $\mathcal{P}_{\alpha}$ of $\alpha$, it holds 
 \begin{equation*}
U_{\alpha}^{\min }(\mathbf{v})\subset \left.
\bigotimes_{\beta \in \mathcal{P}_{\alpha}}U_{\beta}^{\min }(\mathbf{v})\right. .
\end{equation*}
\end{proposition}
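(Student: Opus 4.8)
The plan is to reduce the inclusion to two properties of minimal subspaces that are recorded just above: the \emph{hierarchy inclusion} $\mathbf{v}\in {}_a\bigotimes_{\gamma\in\mathcal{P}_D}U_{\gamma}^{\min}(\mathbf{v})$, valid for every partition $\mathcal{P}_D$ of $D$, together with the defining \emph{minimality} of $U_{\alpha}^{\min}(\mathbf{v})$, namely that it is the smallest subspace $U\subseteq \mathbf{V}_{\alpha}$ for which $\mathbf{v}\in U\otimes_a \mathbf{V}_{D\setminus\alpha}$, where $\mathbf{V}_{D\setminus\alpha}:={}_a\bigotimes_{j\in D\setminus\alpha}V_j$. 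The idea is to exhibit the candidate subspace ${}_a\bigotimes_{\beta\in\mathcal{P}_{\alpha}}U_{\beta}^{\min}(\mathbf{v})$ as one competitor in this minimality property; the asserted containment then follows at once.

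First I would treat the generic case $\alpha\subsetneq D$. I choose the partition $\mathcal{P}_D:=\{\alpha\}\cup\{\{j\}:j\in D\setminus\alpha\}$ of $D$, which has $\alpha$ as one of its blocks. Applying the inclusion displayed just before the statement to this $\mathcal{P}_D$ and to the given partition $\mathcal{P}_{\alpha}$ of $\alpha$ yields
$$
\mathbf{v}\in\left({}_a\bigotimes_{\beta\in\mathcal{P}_{\alpha}}U_{\beta}^{\min}(\mathbf{v})\right)\otimes_a\left({}_a\bigotimes_{j\in D\setminus\alpha}U_{\{j\}}^{\min}(\mathbf{v})\right).
$$
Since $U_{\{j\}}^{\min}(\mathbf{v})\subseteq V_j$ for every $j\in D\setminus\alpha$, the right-hand factor is contained in $\mathbf{V}_{D\setminus\alpha}$, so that $\mathbf{v}\in\bigl({}_a\bigotimes_{\beta\in\mathcal{P}_{\alpha}}U_{\beta}^{\min}(\mathbf{v})\bigr)\otimes_a\mathbf{V}_{D\setminus\alpha}$. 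As ${}_a\bigotimes_{\beta\in\mathcal{P}_{\alpha}}U_{\beta}^{\min}(\mathbf{v})$ is a subspace of $\mathbf{V}_{\alpha}$ enjoying this property, minimality of $U_{\alpha}^{\min}(\mathbf{v})$ forces $U_{\alpha}^{\min}(\mathbf{v})\subseteq{}_a\bigotimes_{\beta\in\mathcal{P}_{\alpha}}U_{\beta}^{\min}(\mathbf{v})$, which is the desired conclusion.

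Finally I would dispose of the boundary case $\alpha=D$. Here $D\setminus\alpha=\emptyset$ and $U_{\alpha}^{\min}(\mathbf{v})=\mathrm{span}\{\mathbf{v}\}$, and applying the hierarchy inclusion with $\mathcal{P}_D=\mathcal{P}_{\alpha}$ gives directly $\mathbf{v}\in{}_a\bigotimes_{\beta\in\mathcal{P}_{\alpha}}U_{\beta}^{\min}(\mathbf{v})$, whence the inclusion of spans. I do not anticipate a genuine obstacle: the entire content rests on the two cited properties of minimal subspaces, and the only point deserving care is to check that the candidate really qualifies as a competitor in the minimality characterisation, i.e.\ that enlarging each complementary factor $U_{\{j\}}^{\min}(\mathbf{v})$ to $V_j$ preserves the membership $\mathbf{v}\in U\otimes_a\mathbf{V}_{D\setminus\alpha}$. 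Because all tensor products involved are algebraic, this enlargement is a purely algebraic manipulation and presents no difficulty.
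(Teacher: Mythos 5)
Your argument is correct. Note first that the paper does not actually prove this proposition: it is recalled from Section~2 of the reference [FALHACK] (Falc\'o--Hackbusch, \emph{On minimal subspaces in tensor representations}), so there is no in-paper proof to match against. Your derivation is the natural one and is legitimate within the paper's logical setup: you take the joint-membership property $\mathbf{v}\in{}_a\bigotimes_{\gamma\in\mathcal{P}}U_{\gamma}^{\min}(\mathbf{v})$ (stated in the paper for an arbitrary partition $\mathcal{P}$ of $D$, and in particular for the refinement $\mathcal{P}_{\alpha}\cup\{\{j\}:j\in D\setminus\alpha\}$) together with the characterisation of $U_{\alpha}^{\min}(\mathbf{v})$ as the smallest subspace $U\subseteq\mathbf{V}_{\alpha}$ with $\mathbf{v}\in U\otimes_a\mathbf{V}_{D\setminus\alpha}$ (Proposition~2.6 of [FHN], which the paper invokes), and you correctly observe that ${}_a\bigotimes_{\beta\in\mathcal{P}_{\alpha}}U_{\beta}^{\min}(\mathbf{v})$ is a competitor in that minimality statement. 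The enlargement of the complementary factors to the full spaces $V_j$ is indeed harmless at the algebraic level, and your separate treatment of $\alpha=D$ via $U_{D}^{\min}(\mathbf{v})=\mathrm{span}\{\mathbf{v}\}$ is fine. The only caveat worth recording is that the real content of the proposition is concentrated in the joint-membership display you start from: that simultaneous containment for a whole partition is itself a nontrivial fact (it rests on the intersection property of minimal subspaces), and in [FALHACK] it is established alongside, not prior to, the present inclusion. Since the paper explicitly offers that display as a recalled prerequisite, your reduction is non-circular in context, but you should be aware that you have shifted, rather than eliminated, the substantive step.
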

 
In order to define tree-based tensor format we introduce three definitions.

\begin{definition}[Dimension partition tree]
\label{partition_tree} A tree $T_{D}$ is called \emph{a dimension
partition tree over $D$} if
\begin{enumerate}
\item[(a)] all vertices $\alpha \in T_D$ are non-empty subsets of $D,$
\item[(b)] $D$ is the \emph{root} of $T_D,$
\item[(c)] every vertex $\alpha \in T_{D}$ with $\#\alpha \geq 2$ has at
least two sons and the set of sons of $\alpha$, denoted $S(\alpha)$, is a non-trivial partition of $\alpha$,
\item[(d)] every vertex $\alpha\in T_D$ with $\#\alpha = 1$ has no son and is called a  \emph{leaf}.
\end{enumerate}
The set of leaves is denoted by $\mathcal{L}(T_{D}).$ 
\end{definition}

A straightforward consequence of Definition~%
\ref{partition_tree} is that the set of leaves $\mathcal{L}(T_{D})$
coincides with the singletons of $D,$ i.e., $\mathcal{L}(T_{D})=\{\{j\}:j\in
D\}$.
\begin{definition}[Levels, depth and partitions]
The \emph{levels} of the vertices of a dimension partition tree $T_D$, denoted by $\mathrm{level}(\alpha)$, $\alpha \in T_D$, are integers defined such that $\mathrm{level}(D) = 0$ and for any pair $\alpha,\beta \in T_D$ such that $\beta \in S(\alpha),$ $\mathrm{level}(\beta) = \mathrm{level}(\alpha)+1$. The \emph{depth} of the tree $T_D$ is defined 
as $\mathrm{depth}(T_D) = \max_{\alpha \in T_D} \level(\alpha)  .$
Then to each level $k$ of $T_D$, $1\le k\le \depth(T_D)$, is associated a
 partition of $D:$
$$
\mathcal{P}_k(T_D) = \{\alpha \in T_D : \mathrm{level}(\alpha) = k\} \cup \{\alpha \in \mathcal{L}(T_D) : \mathrm{level}(\alpha)<k\}.
$$
\end{definition}
\begin{remark}\label{leaf_appears}
Note that for any tree, $\mathcal{P}_{1}(T_D) = S(D)$ and $\mathcal{P}_{\depth(T_D)}(T_D) = \mathcal{L}(T_D)$. Also note that some of the leaves of $T_D$ may be contained in several partitions, and if $\alpha \in \mathcal{L}(T_D)$, then 
$\alpha \in \mathcal{P}_k(T_D)$ for $\level(\alpha) \le k \le \depth(T_D)$. 
\end{remark}
For any partition $\mathcal{P}_k(T_D)$ of level $k$, $1\le k\le \depth(T_D)$, we use the identification $$
\mathbf{V}_D = \left. \bigotimes_{\alpha \in \mathcal{P}_k(T_D)} \mathbf{V}_\alpha \right..
$$
This leads us to the following definition of the representation of the tensor space $\mathbf{V}_D$ in tree-based format.
\begin{definition}
For a tensor space $\mathbf{V}_D$ and a dimension 
partition tree $T_{D}$, the pair 
$(\mathbf{V}_D,T_D)$ is called a representation of the tensor space
$\mathbf{V}_{D}$ in \emph{tree-based format}, and corresponds to the identification   
of $\mathbf{V}_D $ with tensor spaces $ \left. \bigotimes_{\alpha \in \mathcal{P}_k(T_D)} \mathbf{V}_\alpha \right. $ of different levels $k$, $1\le k \le\depth(T_D). $
\end{definition}

\begin{remark}\label{general_chain}
By Proposition~\ref{inclusin_Umin}, for each $\mathbf{v} \in  \mathbf{V}_{D},$ it holds that
$$
\mathbf{v} \in \left. \bigotimes_{\alpha \in \mathcal{P}_1(T_D)}U_{\alpha}^{\min}(\mathbf{v}) \right.
\subset \left. \bigotimes_{\alpha \in \mathcal{P}_2(T_D)}U_{\alpha}^{\min}(\mathbf{v}) \right. \subset
\cdots \subset \left. \bigotimes_{\alpha \in \mathcal{P}_{\depth(T_D)}(T_D)}U_{\alpha}^{\min}(\mathbf{v}) \right..
$$
\end{remark}

\begin{example}[Tucker format]\label{example-tucker}
In Figure~\ref{fig2p}, $D=\{1,2,3,4,5,6\} $ and $$T_D=\{D,\{1\},\{2\},\{3\},\{4\},\{5\},\{6\}\}.$$
Here $\depth(T_D)=1$ and $\mathcal{P}_1(T_D) =  \mathcal{L}(T_D).$ This tree is related to the 
basic identification of $\mathbf{V}_{D}$ with $\left.
\bigotimes_{j=1}^{6}V_{j}\right..$ 
\begin{figure}[h]
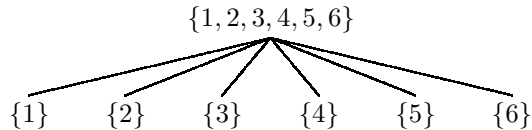

\centering
\synttree[$\{1,2,3,4,5,6\}$[$\{1\}$][$\{2\}$][$\{3\}$]
[$\{4\}$][$\{5\}$][$\{6\}$]]
\caption{a dimension partition tree with $\depth(T_D)=1$ (Tucker tree).}
\label{fig2p}
\end{figure}\end{example}

\begin{example}
\label{example_nobinary} In Figure~\ref{fig1p}, $D=\{1,2,3,4,5,6\}$ and 
\begin{equation*}
T_D=\{D,\{1,2,3\},\{4,5\},\{2,3\},\{1\},\{2\},\{3\},\{4\},\{5\},\{6\}\}.
\end{equation*}
Here $\depth(T_D)=3$, $\mathcal{P}_1(T_D) = \{\{1,2,3\} , \{4,5\} , \{6\}\}$, $\mathcal{P}_2(T_D) = \{\{1\},\{2,3\} , \{4\},\{5\} , \{6\}\}$ and $\mathcal{P}_3(T_D) = \mathcal{L}(T_D).$ This tree is related to the 
 identification of $\mathbf{V}_{D}$ with $\left.
\bigotimes_{j=1}^{6}V_{j}\right.$, $\mathbf{V}_{D} = {V}%
_{1}\otimes \mathbf{V}%
_{23}\otimes {V}_{4}\otimes V_{5}\otimes V_{6} $ and  $\mathbf{V}_{D} = \mathbf{V}%
_{123}\otimes \mathbf{V}_{45}\otimes V_{6} $.
\begin{figure}[h]
\centering
\synttree[$\{1,2,3,4,5,6\}$[$\{1,2,3\}$[$\{1\}$][$\{2,3\}$[$\{2\}$][$\{3\}$]]]
[$\{4,5\}$[$\{4\}$][$\{5\}$]][$\{6\}$ ]]
\caption{A dimension partition tree with $\depth(T_D)=3$.
}\label{fig1p}
\end{figure}
\end{example}
 
Let $\mathbb{N}_{0}:=\mathbb{N} \cup\{0\} $ denote the set of non-negative integers.
For each $\mathbf{v%
} \in \mathbf{V}_D$, we have that $(\dim {U}_{\alpha}^{\min}(\mathbf{v}%
))_{\alpha \in 2^D \setminus \{\emptyset\}}$
is in $\mathbb{N}_0^{2^{\#D}-1}.$ 

\begin{definition}[Tree-based rank]
For a given dimension partition tree $T_D$ over $D$, we define the  
\emph{tree-based rank} 
of a tensor $\mathbf{v}\in
\mathbf{V}_{D}$  by the tuple
$\mathrm{rank}_{T_D}( \mathbf{v} ):= (\dim {U}_{\alpha }^{\min }(%
\mathbf{v}))_{\alpha \in T_{D}}\in \mathbb{N}_0^{\#T_{D}}.$
\end{definition}

\begin{definition}[Admissible ranks]
A tuple $\mathfrak{r}:=(r_{\alpha })_{\alpha \in T_{D}}\in \mathbb{N%
}^{\#T_{D}}$ is said to be an \emph{admissible tuple for $T_{D}$} if there exists $%
\mathbf{v}\in \mathbf{V}_{D}$ such that $\dim
U_{\alpha }^{\min }(\mathbf{v})=r_{\alpha }$ for all $\alpha \in
T_{D}.$ The set of admissible ranks for the representation
$(\mathbf{V}_D,T_D)$ of the tensor space $\mathbf{V}_D$ is denoted by
$$
\mathcal{AD}(\mathbf{V}_D,T_D):=\{(\dim U_{\alpha }^{\min }(\mathbf{v}))_{\alpha \in T_D}:\mathbf{v} \in \mathbf{V}_D\}.
$$
\end{definition}

\begin{definition}
Let $T_{D}$ be a given dimension partition tree and fix some tuple $%
\mathfrak{r}\in \mathcal{AD}(\mathbf{V}_D,T_D)$. Then \emph{the set of 
tensors of fixed tree-based rank $\mathfrak{r}$} is defined by
\begin{equation}
\mathcal{FT}_{\mathfrak{r}}(\mathbf{V}_{D},T_D):=\left\{ \mathbf{v}\in  \mathbf{V}_{D}:\dim {U}_{\alpha }^{\min }(\mathbf{v}%
)=r_{\alpha }\text{ for all }\alpha \in T_{D}\right\}
\end{equation}%
and the \emph{set of tensors of tree-based rank bounded by $\mathfrak{r}$} is
defined by
\begin{equation}
\mathcal{FT}_{\leq \mathfrak{r}}(\mathbf{V}_{D},T_D):=\left\{ \mathbf{v}\in
\mathbf{V}_{D}: 
\dim {U}_{\alpha }^{\min }(\mathbf{v})\leq r_{\alpha }\text{ for all }\alpha
\in T_{D}
\right\} .  \label{(Hr}
\end{equation}
\end{definition}

For $\mathfrak{r},\mathfrak{s}\in \mathbb{%
N}_0^{\#T_{D}}$ we write $\mathfrak{s}\leq \mathfrak{r}$ if and only if $%
s_{\alpha }\leq r_{\alpha }$ for all $\alpha \in T_{D}.$ Then for a fixed
$\mathfrak{r} \in  \mathcal{AD}(\mathbf{V}_D,T_D)$, we have 
\begin{equation}  \label{connected_id}
\mathcal{FT}_{\le \mathfrak{r}}(\mathbf{V}_{D},T_D) :=
\bigcup_{\substack{\mathfrak{s}\leq \mathfrak{r} \\ \mathfrak{s}
\in \mathcal{AD}(\mathbf{V}_D,T_D)}}\mathcal{FT}_{\mathfrak{s}}(\mathbf{V}%
_{D},T_D).
\end{equation}%
For each partition $\mathcal{P}_k(T_D)$ of $D$, $1 \le k \le \depth(T_D)$, we can introduce 
 a set of tensors in
Tucker format with fixed rank $\mathfrak{r}_k:=(r_{\alpha})_{\alpha \in \mathcal{P}_k(T_D)}$ given by
$$
\mathfrak{M}_{\mathfrak{r}_k}(\mathbf{V}_D,\mathcal{P}_k(T_D))
=\{
\mathbf{v} \in \mathbf{V}_D: \dim U_{\alpha}^{\min}(\mathbf{v})=r_{\alpha}
\text{ for } \alpha \in \mathcal{P}_k(T_D)
\}.
$$

\begin{theorem}\label{Char_TBF}
For a dimension partition tree $T_D$ and for $\mathfrak{r}=(r_{\alpha})_{\alpha \in T_D} 
\in  \mathcal{AD}(\mathbf{V}_D,T_D),$ 
$$
\mathcal{FT}_{\mathfrak{r}}(\mathbf{V}_D,T_D) = \bigcap_{k=1}^{\depth(T_D)}
\mathfrak{M}_{\mathfrak{r}_k}(\mathbf{V}_D,\mathcal{P}_k(T_D)).
$$
\end{theorem}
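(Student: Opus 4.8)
The plan is to reduce the claimed set equality to a purely combinatorial statement about which vertices of $T_D$ are covered by the partitions $\mathcal{P}_1(T_D),\ldots,\mathcal{P}_{\depth(T_D)}(T_D)$, together with a separate treatment of the root. First I would unfold both sides in terms of minimal subspaces: directly from the definition of $\mathfrak{M}_{\mathfrak{r}_k}(\mathbf{V}_D,\mathcal{P}_k(T_D))$,
$$
\bigcap_{k=1}^{\depth(T_D)} \mathfrak{M}_{\mathfrak{r}_k}(\mathbf{V}_D,\mathcal{P}_k(T_D)) = \left\{ \mathbf{v}\in \mathbf{V}_D : \dim U_\alpha^{\min}(\mathbf{v}) = r_\alpha \text{ for all } \alpha \in \bigcup_{k=1}^{\depth(T_D)} \mathcal{P}_k(T_D) \right\}.
$$
Here I use that $U_\alpha^{\min}(\mathbf{v})$ depends only on $\alpha$ and $\mathbf{v}$, not on the ambient partition, so that a vertex $\alpha$ lying in several $\mathcal{P}_k$ imposes the same condition each time (recall $\mathfrak{r}_k$ is the restriction of $\mathfrak{r}$), and no inconsistency arises.

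The main step is then the combinatorial identity $\bigcup_{k=1}^{\depth(T_D)} \mathcal{P}_k(T_D) = T_D \setminus \{D\}$. Each $\mathcal{P}_k(T_D)$ consists of vertices of $T_D$, and none of them is the root: a level-$k$ vertex has $\level = k \ge 1$, and a leaf is a singleton while $\#D \ge 2$. Conversely, every vertex $\alpha \neq D$ satisfies $1 \le \level(\alpha) \le \depth(T_D)$ and therefore belongs to $\mathcal{P}_{\level(\alpha)}(T_D)$. Hence the intersection above equals $\{\mathbf{v} : \dim U_\alpha^{\min}(\mathbf{v}) = r_\alpha \text{ for all } \alpha \in T_D \setminus \{D\}\}$, which differs from $\mathcal{FT}_{\mathfrak{r}}(\mathbf{V}_D,T_D)$ only by the absence of the root condition $\dim U_D^{\min}(\mathbf{v}) = r_D$.

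The remaining obstacle, and the only genuinely non-formal point, is to show this missing root condition holds automatically on the intersection. I would use that $U_D^{\min}(\mathbf{v}) = \mathrm{span}\{\mathbf{v}\}$, so that $\dim U_D^{\min}(\mathbf{v}) \in \{0,1\}$ for every $\mathbf{v}$, with value $1$ exactly when $\mathbf{v} \neq 0$. Since $\mathfrak{r} \in \mathcal{AD}(\mathbf{V}_D,T_D)$ and $\mathfrak{r} \in \mathbb{N}^{\#T_D}$, some tensor realizes these ranks, so in particular its root component $r_D$ lies in $\{0,1\}$; as $r_D \ge 1$ this forces $r_D = 1$. Finally, any $\mathbf{v}$ in the intersection satisfies $\dim U_{\{j\}}^{\min}(\mathbf{v}) = r_{\{j\}} \ge 1$ for a leaf $\{j\}$, whence $\mathbf{v} \neq 0$ and $\dim U_D^{\min}(\mathbf{v}) = 1 = r_D$. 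This gives the inclusion $\bigcap_{k} \mathfrak{M}_{\mathfrak{r}_k}(\mathbf{V}_D,\mathcal{P}_k(T_D)) \subseteq \mathcal{FT}_{\mathfrak{r}}(\mathbf{V}_D,T_D)$; the reverse inclusion is immediate because $T_D \setminus \{D\} \subset T_D$, so that membership in $\mathcal{FT}_{\mathfrak{r}}$ enforces all the partition-wise conditions. Combining the two inclusions proves the theorem.
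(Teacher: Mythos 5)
Your proof is correct, and it follows the route the paper implicitly intends: the paper states Theorem~\ref{Char_TBF} without proof, treating it as immediate from the definitions, and your argument supplies exactly the two facts that make it so, namely the combinatorial identity $\bigcup_{k=1}^{\depth(T_D)}\mathcal{P}_k(T_D)=T_D\setminus\{D\}$ and the consistency of the repeated conditions on leaves appearing in several partitions. Your separate treatment of the root via the convention $U_D^{\min}(\mathbf{v})=\mathrm{span}\{\mathbf{v}\}$, forcing $r_D=1$ for any admissible $\mathfrak{r}$ and hence making the root condition automatic on the intersection, is the one genuinely non-formal point, and you handle it correctly.
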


\begin{remark}
We point out that in \cite{FHN} we introduce a representation of $\mathbf{V}_D$ in Tucker format. 
Letting $T_D^{\text{Tucker}}$ be the Tucker dimension partition tree (see example \ref{example-tucker})
and given $\mathfrak{r} \in \mathcal{AD}(\mathbf{V}_D,T_D^{\text{Tucker}})$,   the set of tensors with fixed Tucker rank $\mathfrak{r}$ is defined by
$$
\mathfrak{M}_{\mathfrak{r}}(\mathbf{V}_D):= \mathcal{FT}_{\mathfrak{r}}(\mathbf{V}_{D},T_D^{\text{Tucker}}) =
\left\{\mathbf{v} \in \mathbf{V}_D: \dim U_{k}^{\min}(\mathbf{v}) = r_{k}, \, k \in \mathcal{L}(T_D^{\text{Tucker}}) \right\}.
$$
This leads to the following representation of $\mathbf{V}_D$ in Tucker format:
$$
\mathbf{V}_D = \bigcup_{\mathfrak{r} \in  \mathcal{AD}(\mathbf{V}_D,T_D^{\text{Tucker}})}\mathfrak{M}_{\mathfrak{r}}(\mathbf{V}_D).
$$
Note that for any tree $T_D$ with $\depth(T_D) = 1$,  
$$
\mathfrak{M}_{\mathfrak{r}_{\depth(T_D)}}(\mathbf{V}_D,\mathcal{P}_{\depth(T_D)}(T_D))=
\mathfrak{M}_{\mathfrak{r}_{\depth(T_D)}}(\mathbf{V}_D).
$$
\end{remark}

Finally, we need to take into account the following situation. 
Let  $T_D$ be the rooted tree given in Figure~\ref{fig2pX}. For this rooted
tree we have $\depth(T_D)=2$ and also
\begin{align*}
\mathcal{P}_1(T_D) & =\{\{1\},\{2,3,4,5,6\}\}, \\
\mathcal{P}_2(T_D) & = \{\{1\},\{2\},\{3\},\{4\},\{5\},\{6\}\}.
\end{align*}
From Lemma 2.4 in \cite{FALHACK} it can be shown that $\dim U_{\{1\}}^{\min}(\mathbf{v}) = \dim U_{\{2,3,4,5,6\}}^{\min}(\mathbf{v})$ holds for all $\mathbf{v} \in \mathbf{V}_D.$ Hence
$$
\mathcal{FT}_{\mathfrak{r}}(\mathbf{V}_D,T_D) = \mathfrak{M}_{\mathfrak{r}_1}(\mathbf{V}_D,\mathcal{P}_1(T_D)) \cap \mathfrak{M}_{\mathfrak{r}_2}(\mathbf{V}_D,\mathcal{P}_2(T_D)) = \mathfrak{M}_{\mathfrak{r}_2}(\mathbf{V}_D,\mathcal{P}_2(T_D))
$$
holds because
\begin{align*}
 \mathfrak{M}_{\mathfrak{r}_1}(\mathbf{V}_D,\mathcal{P}_1(T_D)) & =\{
\mathbf{v} \in \mathbf{V}_D: \dim U_{\{1\}}^{\min}(\mathbf{v}) = r_{\{1\}} = \dim U_{\{2,3,4,5,6\}}^{\min}(\mathbf{v})
\}  
\end{align*}
contains
\begin{align*}
\mathfrak{M}_{\mathfrak{r}_2}(\mathbf{V}_D,\mathcal{P}_2(T_D)) & =\{
\mathbf{v} \in \mathbf{V}_D: \dim U_{\{i\}}^{\min}(\mathbf{v}) = r_{\{i\}}, \, 1 \le i \le 6
\} .
\end{align*}
\begin{figure}[h]
\centering
\synttree[$\{1,2,3,4,5,6\}$[$\{1\}$][$\{2,3,4,5,6\}$[$\{2\}$][$\{3\}$]
[$\{4\}$][$\{5\}$][$\{6\}$ ]]]
\caption{A dimension partition tree with $\depth(T_D)=2.$}
\label{fig2pX}
\end{figure}
Thus in order to avoid this situation we introduce the following definition. 
 
\begin{definition}
For a dimension partition tree $T_D$ and for $\mathfrak{r}=(r_{\alpha})_{\alpha \in T_D} 
\in  \mathcal{AD}(\mathbf{V}_D,T_D),$ we will say that $\mathcal{FT}_{\mathfrak{r}}(\mathbf{V}_D,T_D)$ is a
proper set of tree-based tensors with a fixed tree-based rank $\mathfrak{r}$ if
$$
\mathcal{FT}_{\mathfrak{r}}(\mathbf{V}_D,T_D) \neq \mathfrak{M}_{\mathfrak{r}_k}(\mathbf{V}_D,\mathcal{P}_k(T_D)) \text{ holds for } 1 \le k \le \depth(T_D).
$$ 
\end{definition}

\section{The manifold of tensors in Tucker format with fixed rank}\label{Tucker}

In this section we start by introducing the geometric structure of the set of tensors in Tucker format with fixed rank in our framework. Next, we give an equivalent result that allows us to provide a manifold structure
to a proper set of tree-based tensors with a fixed tree-based rank.

\bigskip

Assume that $\mathcal{P}_D$ is a partition of $D$ and 
$(\mathbf{V}_{\alpha},\|\cdot\|_{\alpha})$ is a normed space for each $\alpha \in \mathcal{P}_D.$ 
We will consider the product space $\mathop{\mathchoice{\raise-0.22em\hbox{\huge $\times$}}
{\raise-0.05em\hbox{\Large $\times$}}{\hbox{\large $\times$}}{\times}}%
_{\alpha \in \mathcal{P}_D} \mathbf{V}_{\alpha}$ equipped with the
product topology induced by the maximum norm $\|(\mathbf{v}_{\alpha})_{\alpha \in \mathcal{P}_D})\|_{\times} =
\max_{\alpha \in \mathcal{P}_D} \|\mathbf{v}_{\alpha}\|_{\alpha}.$
Then, from Theorem~3.17 in \cite{FHN}, we have the following result.

\begin{theorem}
\label{Tucker_Banach_Manifold} Assume that $\mathcal{P}_D$ is a partition of $D,$ 
$(\mathbf{V}_{\alpha},\|\cdot\|_{\alpha})$ is a normed space for each $\alpha \in \mathcal{P}_D$ 
and that $\|\cdot\|_D$ is a norm 
on the tensor space $\mathbf{V}_D = \left.
\bigotimes_{\alpha \in \mathcal{P}_D}\mathbf{V}_{\alpha}\right.$ such that the tensor
product map  
\begin{equation}
\bigotimes_{\alpha \in \mathcal{P}_D} :\left(\mathop{\mathchoice{\raise-0.22em\hbox{\huge $\times$}}
{\raise-0.05em\hbox{\Large $\times$}}{\hbox{\large $\times$}}{\times}}%
_{\alpha \in \mathcal{P}_D} \mathbf{V}_{\alpha},\left\Vert \cdot\right\Vert_{\times} \right) 
\longrightarrow%
\bigg(
\left. \bigotimes_{\alpha \in \mathcal{P}_D}\mathbf{V}_{\alpha}\right. ,\left\Vert \cdot\right\Vert_D
\bigg)
,  \label{bigotimes}
\end{equation}
is continuous.
Then there exists a  $\mathcal{C}^{\infty}$-atlas $\{\mathcal{U}(\mathbf{v}),\widetilde{\xi}_{\mathbf{v}}\}_{\mathbf{v}\in \mathfrak{M}_{\mathfrak{%
r}}(\mathbf{V}_{D})}$ for $\mathfrak{M}_{\mathfrak{r}}(%
\mathbf{V}_{D})$ and hence $\mathfrak{M}_{\mathfrak{r}}(%
\mathbf{V}_{D})$ is a $\mathcal{C}^{\infty}$-Banach manifold modelled on a Banach space
$$
\left( 
\mathop{\mathchoice{\raise-0.22em\hbox{\huge
$\times$}} {\raise-0.05em\hbox{\Large $\times$}}{\hbox{\large
$\times$}}{\times}}_{\alpha \in \mathcal{P}_D}\mathcal{L}(\mathbf{U}_{\alpha},\mathbf{W}_{\alpha})\right) \times \mathbb{R}^{
\mathop{\mathchoice{\raise-0.22em\hbox{\huge $\times$}}
{\raise-0.05em\hbox{\Large $\times$}}{\hbox{\large
$\times$}}{\times}}_{\alpha \in \mathcal{P}_D}r_{\alpha}}.
$$
Here $\mathbf{U}_{\alpha} \in \mathbb{G}_{r_{\alpha}}(\mathbf{V}_{\alpha})$ and $\mathbf{V}_{\alpha_{\|\cdot\|_{\alpha}}} = \mathbf{U}_{\alpha} \oplus \mathbf{W}_{\alpha},$ where $\mathbf{V}_{{\alpha}_{\|\cdot\|_{\alpha}}}$ is the completion of
$\mathbf{V}_{\alpha}$ for $\alpha \in \mathcal{P}_D.$
\end{theorem}

To define a manifold structure (see \cite{Lang}) we did not require that the vector 
spaces involved as coordinates are the same or even linearly isomorphic. In our case, we have that
$U_{\alpha}^{\min}(\mathbf{v})$ is linearly isomorphic to $U_{\alpha}^{\min}(\mathbf{w})$ 
for all $\mathbf{w} \in \mathfrak{M}_{\mathfrak{r}}(\mathbf{V}_{D})$. Thus, we fix one
$\mathbf{U}_{\alpha} = U_{\alpha}^{\min}(\mathbf{v})$ and hence it can be shown that
$\mathcal{L}(\mathbf{U}_{\alpha},\mathbf{W}_{\alpha})$ is linearly isomorphic to 
$\mathcal{L}(U_{\alpha}^{\min}(\mathbf{w}),W_{\alpha}^{\min}(\mathbf{w}))$ for all $\mathbf{w} \in \mathfrak{M}_{\mathfrak{r}}(\mathbf{V}_{D}),$ where 
$W_{\alpha}^{\min}(\mathbf{w})$ is linearly isomorphic to $\mathbf{W}_{\alpha}$ and
satisfies $\mathbf{V}_{{\alpha}_{\|\cdot\|_{\alpha}}} = U_{\alpha}^{\min}(\mathbf{w}) \oplus W_{\alpha}^{\min}(\mathbf{w}).$ Moreover,
$\left. \bigotimes_{\alpha \in \mathcal{P}_D}\mathbf{U}_{\alpha} \right.$ is linearly isomorphic
to $\left. \bigotimes_{\alpha \in \mathcal{P}_D}U_{\alpha}^{\min}(\mathbf{w}) \right.$ for all $\mathbf{w} \in \mathfrak{M}_{\mathfrak{r}}(\mathbf{V}_{D}).$ In consequence, $\mathfrak{M}_{\mathfrak{r}}(\mathbf{V}_{D})$ has a geometric structure modelled on the Banach space
$$
\left(\mathop{\mathchoice{\raise-0.22em\hbox{\huge
$\times$}} {\raise-0.05em\hbox{\Large $\times$}}{\hbox{\large
$\times$}}{\times}}_{\alpha \in \mathcal{P}_D}\mathcal{L}(\mathbf{U}_{\alpha},\mathbf{W}_{\alpha}) \right)\times \left. \bigotimes_{\alpha \in \mathcal{P}_D}\mathbf{U}_{\alpha} \right.,
$$ 
which is linearly isomorphic to
$$
\left( 
\mathop{\mathchoice{\raise-0.22em\hbox{\huge
$\times$}} {\raise-0.05em\hbox{\Large $\times$}}{\hbox{\large
$\times$}}{\times}}_{\alpha \in \mathcal{P}_D}\mathcal{L}(\mathbf{U}_{\alpha},\mathbf{W}_{\alpha})\right) \times \mathbb{R}^{
\mathop{\mathchoice{\raise-0.22em\hbox{\huge $\times$}}
{\raise-0.05em\hbox{\Large $\times$}}{\hbox{\large
$\times$}}{\times}}_{\alpha \in \mathcal{P}_D}r_{\alpha}}.
$$

\bigskip

The atlas $\{\mathcal{U}(\mathbf{v}),\widetilde{\xi}_{\mathbf{v}}\}_{\mathbf{v}\in \mathfrak{M}_{\mathfrak{%
r}}(\mathbf{V}_{D})}$ from Theorem \ref{Tucker_Banach_Manifold}  is composed by a subset $\mathcal{U}(\mathbf{v}) \subset \mathfrak{M}_{\mathfrak{r}}(\mathbf{V}_{D})$ containing $\mathbf{v}$ and a bijection $\widetilde{\xi}_{\mathbf{v}}$ from $\mathcal{U}(\mathbf{v})$
to the open set 
$$
\left( 
\mathop{\mathchoice{\raise-0.22em\hbox{\huge
$\times$}} {\raise-0.05em\hbox{\Large $\times$}}{\hbox{\large
$\times$}}{\times}}_{\alpha \in \mathcal{P}_D}\mathcal{L}(\mathbf{U}_{\alpha},\mathbf{W}_{\alpha})\right) \times 
\mathfrak{M}_{\mathfrak{r}}\left(\left. \bigotimes_{\alpha \in \mathcal{P}_D}\mathbf{U}_{\alpha} \right.\right),
$$
which is contained in the Banach space
$$
\left( 
\mathop{\mathchoice{\raise-0.22em\hbox{\huge
$\times$}} {\raise-0.05em\hbox{\Large $\times$}}{\hbox{\large
$\times$}}{\times}}_{\alpha \in \mathcal{P}_D}\mathcal{L}(\mathbf{U}_{\alpha},\mathbf{W}_{\alpha})\right) \times 
\left. \bigotimes_{\alpha \in \mathcal{P}_D}\mathbf{U}_{\alpha} \right..
$$
From Lemma 3.12 in \cite{FHN}, for $\mathbf{w} \in \mathcal{U}(\mathbf{v}),$ we have $\widetilde{\xi}_{\mathbf{v}}(\mathbf{w}) = ((L_{\alpha})_{\alpha \in \mathcal{P}_D},\mathbf{u})$ if and
only if $$\mathbf{w} =\widetilde{\xi}_{\mathbf{v}}^{-1}((L_{\alpha})_{\alpha \in \mathcal{P}_D},\mathbf{u})  = \left(\bigotimes_{\alpha \in \mathcal{P}_D} \exp(L_{\alpha})\right)(\mathbf{u}).$$ In particular, we have $\widetilde{\xi}_{\mathbf{v}}(\mathbf{v}) = ((0_{\alpha})_{\alpha \in \mathcal{P}_D},\mathbf{v}),$ where
$0_{\alpha}$ denotes the zero map in $\mathcal{L}(\mathbf{U}_{\alpha},\mathbf{W}_{\alpha}).$

\bigskip

We recall that $\overline{\mathbf{V}_D}^{\|\cdot\|_D}
= \mathbf{V}_{D_{\|\cdot\|_D}}$ denotes the tensor Banach space obtained 
as the completion of the algebraic
tensor space $\mathbf{V}_D$  under the norm $\|\cdot\|_D.$ 
In the case where $\mathbf{V}_D$ is finite dimensional, $  \mathbf{V}_{D_{\|\cdot\|_D}} = \mathbf{V}_D$. Otherwise, $  \mathbf{V}_D \subsetneq  \mathbf{V}_{D_{\|\cdot\|_D}}$.
Our next step is, given a fixed partition $\mathcal{P}_D$ of $D,$ 
to identify the Banach space $\bigtimes_{\alpha \in \mathcal{P}_D}\mathcal{L}(\mathbf{U}_\alpha ,\mathbf{W}_\alpha)$ with
a closed subspace of the Banach algebra $\mathcal{L}(\mathbf{V}_{D_{\|\cdot\|_D}},\mathbf{V}_{D_{\|\cdot\|_D}}).$ To this end, we need to proceed in the framework of Section 4 in \cite{FHN}. First, we recall the definition of injective norm (Definition 4.9 in \cite{FHN}) stated in the present framework.

\begin{definition}
Let $\mathbf{V}_{\alpha}$ be a Banach space with norm $\left\Vert \cdot\right\Vert _{\alpha}$
for $\alpha\in \mathcal{P}_D.$ Then for $\mathbf{v}\in\mathbf{V}=\left.
\bigotimes_{\alpha \in \mathcal{P}_D}\mathbf{V}_{\alpha}\right. $ define $\left\Vert \cdot\right\Vert
_{\vee((\mathbf{V}_{\alpha})_{\alpha \in \mathcal{P}_D})}$ by%
\begin{equation}
\left\Vert \mathbf{v}\right\Vert _{\vee((\mathbf{V}_{\alpha})_{\alpha \in \mathcal{P}_D})}:=\sup\left\{ \frac{%
\left\vert \left(
\bigotimes_{\alpha \in \mathcal{P}_D}\varphi_{\alpha}\right) (\mathbf{%
v})\right\vert }{\prod_{\alpha \in \mathcal{P}_D}\Vert\varphi_{\alpha}\Vert_{\alpha}^{\ast}}%
:0\neq\varphi_{\alpha}\in \mathbf{V}_{\alpha}^{\ast},\alpha \in \mathcal{P}_D\right\} ,
\label{(Norm ind*(V1,...,Vd)}
\end{equation}
where $\mathbf{V}_{\alpha}^{\ast}$ is the continuous dual of $\mathbf{V}_{\alpha}$.
\end{definition}

Let $W$ and $U$  be closed subspaces of a Banach space $X$ such that $%
X=U\oplus W.$ From now on, we will denote by $P_{_{U\oplus W}}$ the
projection onto $U$ along $W.$ 
Then we have $P_{_{W\oplus U}}=id_{X}-P_{_{U\oplus W}}.$ The proof of the 
next result uses Proposition 2.8, Lemma 4.13 and Lemma 4.14 in \cite{FHN}.

\begin{lemma}\label{G1}
Assume that
$(\mathbf{V}_{\alpha},\|\cdot\|_{\alpha})$ is a normed space 
for each $\alpha \in \mathcal{P}_D$ 
and let $\|\cdot\|_D$ be a norm 
on the tensor space $\mathbf{V}_D = \left.
\bigotimes_{\alpha \in \mathcal{P}_D}\mathbf{V}_{\alpha}\right.$ such that 
\begin{align}\label{tree_injective_norm}
\left\Vert \cdot\right\Vert
_{\vee((\mathbf{V}_{\alpha})_{\alpha \in \mathcal{P}_D})}\lesssim\left\Vert \cdot\right\Vert_D, 
\end{align} 
holds. Let $\mathbf{U}_{\alpha} \in \mathbb{G}_{r_{\alpha}}(\mathbf{V}_{\alpha})$ and $\mathbf{V}_{\alpha_{\|\cdot\|_{\alpha}}} = \mathbf{U}_{\alpha} \oplus \mathbf{W}_{\alpha},$ where $\mathbf{V}_{{\alpha}_{\|\cdot\|_{\alpha}}}$ is the completion of
$\mathbf{V}_{\alpha}$ for $\alpha \in \mathcal{P}_D.$ Then for each $\alpha \in \mathcal{P}_D$ we have
$$
\mathcal{L}\left(\mathbf{U}_{\alpha} ,\mathbf{W}_{\alpha} \right) \otimes
\mathrm{span}\{\mathbf{id}_{[\alpha]}\}  \in \mathbb{G}\left( \mathcal{L}(\mathbf{V}_{D_{\|\cdot\|_D}},\mathbf{V}_{D_{\|\cdot\|_D}})\right)
$$
where  $\mathbf{id}_{[\alpha]} := \bigotimes_{\beta \in \mathcal{P}_D \setminus \{\alpha\}} id_{\mathbf{V}_{\beta}}.$
Furthermore,
$$
\bigoplus_{\alpha \in \mathcal{P}_D}\mathcal{L}\left(\mathbf{U}_{\alpha} ,\mathbf{W}_{\alpha} \right) \otimes
\mathrm{span}\{\mathbf{id}_{[\alpha]}\}  \in \mathbb{G}\left( \mathcal{L}(\mathbf{V}_{D_{\|\cdot\|_D}},\mathbf{V}_{D_{\|\cdot\|_D}})\right).
$$
\end{lemma}

\begin{proof}
To prove the lemma, for a fixed $\alpha \in \mathcal{P}_D,$ note that
$id_{\mathbf{V}_{\alpha}} = P_{_{\mathbf{U}_{\alpha} \oplus \mathbf{W}_{\alpha}}}+ P_{_{\mathbf{W}_{\alpha}\oplus \mathbf{U}_{\alpha} }}$
and write
$$
id_{\mathbf{V}_{D_{\|\cdot\|_D}}} = id_{\mathbf{V}_{\alpha}} \otimes \mathbf{id}_{[\alpha]}.
$$
Since $\mathbf{U}_{\alpha}$ is a finite dimensional space, $P_{_{\mathbf{U}_{\alpha}\oplus \mathbf{W}_{\alpha} }}$ is a finite rank projection and hence $P_{_{\mathbf{U}_{\alpha}\oplus \mathbf{W}_{\alpha} }} \otimes \mathbf{id}_{[\alpha]} \in \mathcal{L}(\mathbf{V}_{D_{\|\cdot\|_D}},\mathbf{V}_{D_{\|\cdot\|_D}}).$ Then by proceeding as in the proof of Lemma 4.13 in \cite{FHN} we obtain that
$$
P_{_{\mathbf{W}_{\alpha}\oplus \mathbf{U}_{\alpha} }} \otimes \mathbf{id}_{[\alpha]} \in \mathcal{L}(\mathbf{V}_{D_{\|\cdot\|_D}},\mathbf{V}_{D_{\|\cdot\|_D}}).
$$
Now, define the linear and bounded map
$$
\mathcal{P}_{\alpha}:\mathcal{L}(\mathbf{V}_{D_{\|\cdot\|_D}},\mathbf{V}_{D_{\|\cdot\|_D}}) \longrightarrow
\mathcal{L}(\mathbf{V}_{D_{\|\cdot\|_D}},\mathbf{V}_{D_{\|\cdot\|_D}})
$$
as $\mathcal{P}_{\alpha}(L) = (P_{_{\mathbf{W}_{\alpha}\oplus \mathbf{U}_{\alpha} }} \otimes \mathbf{id}_{[\alpha]}) \circ L \circ (P_{_{\mathbf{U}_{\alpha}\oplus \mathbf{W}_{\alpha} }} \otimes \mathbf{id}_{[\alpha]}).$ It satisfies 
$\mathcal{P}_{\alpha} \circ \mathcal{P}_{\alpha} = \mathcal{P}_{\alpha}$ and
$$
\mathcal{P}_{\alpha}(\mathcal{L}(\mathbf{V}_{D_{\|\cdot\|_D}},\mathbf{V}_{D_{\|\cdot\|_D}})) =
\mathcal{L}\left(\mathbf{U}_{\alpha} ,\mathbf{W}_{\alpha} \right) \otimes
\mathrm{span}\{\mathbf{id}_{[\alpha]}\} .
$$
Proposition~2.8(b) in \cite{FHN} implies that $\mathcal{L}\left(\mathbf{U}_{\alpha} ,\mathbf{W}_{\alpha} \right) \otimes
\mathrm{span}\{\mathbf{id}_{[\alpha]}\}  \in \mathbb{G}\left( \mathcal{L}(\mathbf{V}_{D_{\|\cdot\|_D}},\mathbf{V}_{D_{\|\cdot\|_D}})\right).$
Observe that for $\alpha,\beta \in \mathcal{P}_D$ with $\alpha \neq \beta$ we have
$$
(\mathcal{L}\left(\mathbf{U}_{\alpha} ,\mathbf{W}_{\alpha} \right) \otimes
\mathrm{span}\{\mathbf{id}_{[\alpha]}\} ) \cap (\mathcal{L}\left(\mathbf{U}_{\beta} ,\mathbf{W}_{\beta} \right) \otimes
\mathrm{span}\{\mathbf{id}_{[\beta]}\}) = \{\mathbf{0}\}.
$$
By Lemma~4.14 in \cite{FHN} we have
$$
\bigoplus_{\alpha \in \mathcal{P}_D}\mathcal{L}\left(\mathbf{U}_{\alpha} ,\mathbf{W}_{\alpha} \right) \otimes
\mathrm{span}\{\mathbf{id}_{[\alpha]}\}  \in \mathbb{G}\left( \mathcal{L}(\mathbf{V}_{D_{\|\cdot\|_D}},\mathbf{V}_{D_{\|\cdot\|_D}})\right).
$$
This proves the lemma.
\end{proof}

\bigskip

Lemma~\ref{G1} allows to introduce the following linear isomorphism:
$$
\Delta:\mathop{\mathchoice{\raise-0.22em\hbox{\huge
 $\times$}} {\raise-0.05em\hbox{\Large $\times$}}{\hbox{\large
 $\times$}}{\times}}_{\alpha \in  \mathcal{P}_D}\mathcal{L}(\mathbf{U}_{\alpha},\mathbf{W}_{\alpha}) \longrightarrow 
\bigoplus_{\alpha \in \mathcal{P}_D}\mathcal{L}\left(U_{\alpha} ,\mathbf{W}_{\alpha} \right) \otimes
\mathrm{span}\{\mathbf{id}_{[\alpha]}\} 
 , \quad 
 (L_{\alpha})_{\alpha \in  \mathcal{P}_D} \mapsto \sum_{\alpha \in  \mathcal{P}_D} L_{\alpha} \otimes \mathbf{id}_{[\alpha]}.
$$
where $\mathbf{id}_{[\alpha]} := \bigotimes_{\beta \in \mathcal{P}_D \setminus \{\alpha\}} id_{\mathbf{V}_{\beta}}$
for $\alpha \in \mathcal{P}_D.$ 
The next proposition gives us a useful property of the elements in the image of the map $\Delta.$

\begin{proposition}\label{fundamental_property} Assume that $\mathcal{P}_D$ is a partition of $D,$
 $(\mathbf{V}_{\alpha},\|\cdot\|_{\alpha})$ is a normed space for each $\alpha \in \mathcal{P}_D$ 
and  $\|\cdot\|_D$ is a norm 
on the tensor space $\mathbf{V}_D = \left.
\bigotimes_{\alpha \in \mathcal{P}_D} \mathbf{V}_{\alpha}\right.$ such that (\ref{tree_injective_norm}) holds.  Then for each 
$(L_{\alpha})_{\alpha \in \mathcal{P}_D} \in \mathop{\mathchoice{\raise-0.22em\hbox{\huge
 $\times$}} {\raise-0.05em\hbox{\Large $\times$}}{\hbox{\large
 $\times$}}{\times}}_{\alpha \in \mathcal{P}_D}\mathcal{L}(\mathbf{U}_{\alpha},\mathbf{W}_{\alpha})$ it holds that
$$
\exp(\Delta\left((L_{\alpha})_{\alpha \in \mathcal{P}_D})\right)= 
\bigotimes_{\alpha \in \mathcal{P}_D} \exp(L_{\alpha}).
$$
\end{proposition}

\begin{proof}
 Put $L:=\Delta\left((L_{\alpha})_{\alpha \in \mathcal{P}_D}\right) = \sum_{\alpha \in \mathcal{P}_D}L_{\alpha} \otimes \mathbf{id}_{[\alpha]}$ 
 and observe that for each $\alpha \in \mathcal{P}_D$ it
holds 
$$
\exp(L_{\alpha} \otimes \mathbf{id}_{[\alpha]}) = \sum_{n=0}^{\infty} \frac{1}{n!}
(L_{\alpha} \otimes \mathbf{id}_{[\alpha]})^n =  
\left(\sum_{n=0}^{\infty} \frac{1}{n!} L_{\alpha}^n\right) \otimes \mathbf{id}_{[\alpha]}
= \exp(L_{\alpha})\otimes \mathbf{id}_{[\alpha]}.
$$
Moreover for $\alpha,\beta \in \mathcal{P}_D$ and $\alpha \neq \beta$ we have
$$
(L_{\alpha} \otimes \mathbf{id}_{[\alpha]}) \circ (L_{\beta} \otimes \mathbf{id}_{[\beta]})
= (L_{\beta} \otimes \mathbf{id}_{[\beta]}) \circ (L_{\alpha} \otimes \mathbf{id}_{[\alpha]})
= L_{\alpha} \otimes L_{\beta} \otimes \left(\bigotimes_{\delta \in \mathcal{P}_D \setminus \{\alpha , \beta\}} id_{\mathbf{V}_{\delta}}\right).
$$
Finally, by seing $\mathcal{P}_D$ as an ordered set, and by denoting $
\bigodot_{i=1}^n A_i := A_1 \circ A_2 \circ \cdots \circ A_n
$ is the composition of maps $A_i$, $1\le i \le n$, we have
\begin{align*}
\exp\left( L \right)=
\bigodot_{\alpha \in \mathcal{P}_D} \exp(L_{\alpha} \otimes \mathbf{id}_{[\alpha]})  =
\bigodot_{\alpha \in \mathcal{P}_D} \exp(L_{\alpha})\otimes \mathbf{id}_{[\alpha]} = 
 \bigotimes_{\alpha \in \mathcal{P}_D} \exp(L_{\alpha}).
\end{align*}
Note that since operators $\exp(L_{\alpha})\otimes \mathbf{id}_{[\alpha]}$ and $\exp(L_{\beta})\otimes \mathbf{id}_{[\beta]}$  commute for any $\alpha,\beta \in \mathcal{P}_D$, the above result is independent of the chosen order on $\mathcal{P}_D$. 
This proves the proposition.
\end{proof}

\bigskip

To simplify notation, let  
$$
\mathbf{E}_{\mathcal{P}_D}:= \left(\bigoplus_{\alpha \in \mathcal{P}_D}\mathcal{L}\left(\mathbf{U}_{\alpha} ,\mathbf{W}_{\alpha} \right) \otimes
\mathrm{span}\{\mathbf{id}_{[\alpha]}\}\right).
$$
Recall that $\widetilde{\xi}_{\mathbf{v}}$ is a bijection from $\mathcal{U}(\mathbf{v})$ to the open set
$$
\left( 
\mathop{\mathchoice{\raise-0.22em\hbox{\huge
$\times$}} {\raise-0.05em\hbox{\Large $\times$}}{\hbox{\large
$\times$}}{\times}}_{\alpha \in \mathcal{P}_D}\mathcal{L}(\mathbf{U}_{\alpha},\mathbf{W}_{\alpha})\right) \times 
\mathfrak{M}_{\mathfrak{r}}\left(\left. \bigotimes_{\alpha \in \mathcal{P}_D}\mathbf{U}_{\alpha} \right.\right).
$$
Hence the map $\xi_{\mathbf{v}}:= (\Delta \times id) \circ \widetilde{\xi}_{\mathbf{v}},$ where $id: \mathfrak{M}_{\mathfrak{r}}\left(\left. \bigotimes_{\alpha \in \mathcal{P}_D}\mathbf{U}_{\alpha} \right.\right) \longrightarrow \mathfrak{M}_{\mathfrak{r}}\left(\left. \bigotimes_{\alpha \in \mathcal{P}_D}\mathbf{U}_{\alpha} \right.\right)$ is the identity map, is
a bijection from $\mathcal{U}(\mathbf{v})$ to the open set
$$
\mathbf{E}_{\mathcal{P}_D}  \times 
\mathfrak{M}_{\mathfrak{r}}\left(\left. \bigotimes_{\alpha \in \mathcal{P}_D}\mathbf{U}_{\alpha} \right.\right).
$$
For each $\mathbf{w} \in \mathcal{U}(\mathbf{v}),$ we have $ \widetilde \xi_{\mathbf{v}}(\mathbf{w}) = ((L_{\alpha})_{\alpha \in \mathcal{P}_D},\mathbf{u})$ for some 
$(L_{\alpha})_{\alpha \in \mathcal{P}_D} \in \left( 
\mathop{\mathchoice{\raise-0.22em\hbox{\huge
$\times$}} {\raise-0.05em\hbox{\Large $\times$}}{\hbox{\large
$\times$}}{\times}}_{\alpha \in \mathcal{P}_D}\mathcal{L}(\mathbf{U}_{\alpha},\mathbf{W}_{\alpha})\right)  $ and $\mathbf{u} \in \mathfrak{M}_{\mathfrak{r}}\left(\left. \bigotimes_{\alpha \in \mathcal{P}_D}\mathbf{U}_{\alpha} \right.\right)$. Then, letting $L := \Delta((L_{\alpha})_{\alpha \in \mathcal{P}_D}),$ 
$$
\mathbf{w} = \xi_{\mathbf{v}}^{-1}(L, \mathbf{u}) =
 \xi_{\mathbf{v}}^{-1}(\Delta((L_{\alpha})_{\alpha \in \mathcal{P}_D}), \mathbf{u})
= ((\Delta \times id) \circ \widetilde{\xi}_{\mathbf{v}})^{-1}(\Delta((L_{\alpha})_{\alpha \in \mathcal{P}_D}), \mathbf{u})
= \widetilde{\xi}_{\mathbf{v}}^{-1}((L_{\alpha})_{\alpha \in \mathcal{P}_D},\mathbf{u}).
$$
Thus, thanks to Proposition~\ref{fundamental_property}, we deduce that the equality 
\begin{align}\label{Eq1}
\mathbf{w} = \xi_{\mathbf{v}}^{-1}(\Delta((L_{\alpha})_{\alpha \in \mathcal{P}_D}), \mathbf{u}) =  \left( \bigotimes_{
\alpha \in \mathcal{P}_D}\exp(L_{\alpha})\right) (\mathbf{u})
\end{align}
is equivalent to
\begin{align}\label{igualdad_fundamental}
\mathbf{w} = \xi_{\mathbf{v}}^{-1}(L, \mathbf{u}) =  \exp(L)(\mathbf{u}),
\end{align}
where $L= \sum_{\alpha \in \mathcal{P}_D}L_{\alpha} \otimes \mathbf{id}_{[\alpha]}$ is a Laplacian-like map. In consequence, every tensor in Tucker format is locally characterised by a full-rank tensor and a Laplacian-like map. 
To conclude, we can re-state Theorem~\ref{Tucker_Banach_Manifold} as follows.

\begin{theorem}
\label{Tucker_Banach_ManifoldXXX} 
Assume that $\mathcal{P}_D$ is a partition of $D,$ 
$(V_{\alpha},\|\cdot\|_{\alpha})$ is a normed space 
for each $\alpha \in \mathcal{P}_D$ 
and let $\|\cdot\|_D$ be a norm 
on the tensor space $\mathbf{V}_D = \left.
\bigotimes_{\alpha \in D}\mathbf{V}_{\alpha}\right.$ such that  
\eqref{tree_injective_norm} holds. 
Then there exists a  $\mathcal{C}^{\infty}$-atlas $\{\mathcal{U}(\mathbf{v}),\xi_{\mathbf{v}}\}_{\mathbf{v}\in \mathfrak{M}_{\mathfrak{%
r}}(\mathbf{V}_{D})}$ for $\mathfrak{M}_{\mathfrak{r}}(%
\mathbf{V}_{D})$ and hence $\mathfrak{M}_{\mathfrak{r}}(%
\mathbf{V}_{D})$ is a $\mathcal{C}^{\infty}$-Banach manifold modelled on a Banach space
$$
\mathbf{E}_{\mathcal{P}_D} \times \mathbb{R}^{
\mathop{\mathchoice{\raise-0.22em\hbox{\huge $\times$}}
{\raise-0.05em\hbox{\Large $\times$}}{\hbox{\large
$\times$}}{\times}}_{\alpha \in \mathcal{P}_D}r_{\alpha}},
$$
here $\mathbf{U}_{\alpha}\in \mathbb{G}_{r_{\alpha}}(\mathbf{V}_{\alpha})$ and $\mathbf{V}_{\alpha_{\|\cdot\|_{\alpha}}} = \mathbf{U}_{\alpha} \oplus \mathbf{W}_{\alpha},$ where $\mathbf{V}_{{\alpha}_{\|\cdot\|_{\alpha}}}$ is the completion of
$\mathbf{V}_{\alpha}$ for $\alpha \in \mathcal{P}_D.$
\end{theorem}

Observe that for any partition $\mathcal{P}_D$ of $D,$ from Lemma~\ref{G1}, 
the Banach space $\mathbf{E}_{\mathcal{P}_D}$ is a closed linear subspace of the Banach space $\mathcal{L}(\mathbf{V}_{D_{\|\cdot\|_D}},\mathbf{V}_{D_{\|\cdot\|_D}}).$

\section{The geometry of tree-based tensor format}
\label{sec:banach_manifold_tucker_fixed_rank}
For a dimension partition tree $T_D$ and for $\mathfrak{r}=(r_{\alpha})_{\alpha \in T_D} 
\in  \mathcal{AD}(\mathbf{V}_D,T_D),$ assume that $\mathcal{FT}_{\mathfrak{r}}(\mathbf{V}_D,T_D)$ is a
proper set of tree-based tensors with a fixed tree-based rank $\mathfrak{r}$ such
that
$$
\mathcal{FT}_{\mathfrak{r}}(\mathbf{V}_D,T_D) = \bigcap_{k=1}^{\depth(T_D)}
\mathfrak{M}_{\mathfrak{r}_k}(\mathbf{V}_D,\mathcal{P}_k(T_D)).
$$
Assume that
$(\mathbf{V}_{\alpha},\|\cdot\|_{\alpha})$ is a normed space for each $\alpha \in \mathcal{P}_{k}(T_D)$ and that $\|\cdot\|_D$ is a norm 
on the tensor space $\mathbf{V}_D = \left.
\bigotimes_{\alpha \in \mathcal{P}_{k}(T_D)} \mathbf{V}_{\alpha}\right.$ such that 
\eqref{tree_injective_norm} holds for $1 \le k \le \depth(T_D).$  

\bigskip

From Theorem~\ref{Tucker_Banach_ManifoldXXX} we have that for each $1 \le k \le \depth(T_D)$ the collection $\mathcal{A}_{k} = \{(\mathcal{U}^{(k)}(\mathbf{v}),\xi_{\mathbf{v}}^{(k)})\}_{\mathbf{v}\in \mathfrak{M}_{\mathfrak{%
r}_{k}}(\mathbf{V}_{D},T_D)}$ is a  $\mathcal{C}^{\infty}$-atlas for $\mathfrak{M}_{\mathfrak{r}_{k}}(%
\mathbf{V}_{D},T_D)$ and hence  $\mathfrak{M}_{\mathfrak{r}_{{k}}}(%
\mathbf{V}_{D},T_D)$  is a $\mathcal{C}^{\infty}$-Banach manifold modelled on 
$$
\left(\bigoplus_{\alpha \in \mathcal{P}_k(T_D)}\mathcal{L}\left(\mathbf{U}_{\alpha} ,\mathbf{W}_{\alpha} \right) \otimes
\mathrm{span}\{\mathbf{id}_{[\alpha]}\}\right) \times \mathbb{R}^{
\bigtimes_{\alpha \in \mathcal{P}_{k}(T_D)}r_{\alpha}},
$$
where $ \mathbf{U}_{\alpha}=U_{\alpha}^{\min}(\mathbf{v})$ is a $r_{\alpha}$-dimensional subspace of $\mathbf{V}_{\alpha}$ for each $\alpha \in \mathcal{P}_k(T_D)$ where $\mathbf{v} \in  \left. \bigotimes_{\alpha \in \mathcal{P}_k(T_D)}\mathbf{U}_{\alpha} \right.$ and $\mathbf{W}_\alpha$ is a closed subspace of $\mathbf{V}_{\alpha_{\|\cdot\|_{\alpha}}}$ such that $\mathbf{V}_{\alpha_{\|\cdot\|_{\alpha}}} = \mathbf{U}_{\alpha} \oplus \mathbf{W}_{\alpha} ,$ where $\mathbf{V}_{{\alpha}_{\|\cdot\|_{\alpha}}}$ is the completion of
$\mathbf{V}_{\alpha}$ for $\alpha \in \mathcal{P}_{k}(T_D).$

\bigskip

To simplify notation, here we write
$$
\mathbf{E}_k:= \left(\bigoplus_{\alpha \in \mathcal{P}_k(T_D)}\mathcal{L}\left(\mathbf{U}_{\alpha} ,\mathbf{W}_{\alpha}\right) \otimes
\mathrm{span}\{\mathbf{id}_{[\alpha]}\}\right)
$$
for $1 \le k \le \depth(T_D).$ Next, we characterise the elements in the product set
$$
\bigcap_{k=1}^{\depth(T_D)}\xi_{\mathbf{v}}^{(k)}(\mathcal{U}^{(k)}(\mathbf{v})) = \left(\bigcap_{k=1}^{\depth(T_D)}\mathbf{E}_k \right)
 \times \left(\bigcap_{k=1}^{\depth(T_D)} \mathfrak{M}_{\mathbf{r}_k}\left(
 \left. \bigotimes_{\alpha \in \mathcal{P}_{k}(T_D)} \mathbf{U}_{\alpha} \right.
 \right)\right).
$$
Let $\mathcal{O}:=\bigcap_{k=1}^{\depth(T_D)} \mathfrak{M}_{\mathbf{r}_k}\left(
 \left. \bigotimes_{\alpha \in \mathcal{P}_{k}(T_D)} \mathbf{U}_{\alpha} \right.
 \right)$ and $\mathbf{E}:= \bigcap_{k=1}^{\depth(T_D)}\mathbf{E}_k.$ Then we have the following result.
\begin{lemma}
Let $T_D$ be a dimension partition tree with
$\depth(T_D) \ge 2,$ and $\mathfrak{r}=(r_{\alpha})_{\alpha \in T_D} 
\in  \mathcal{AD}(\mathbf{V}_D,T_D)$ such that $\mathcal{FT}_{\mathfrak{r}}(\mathbf{V}_D,T_D)$ is a
proper set of tree-based tensors with a fixed tree-based rank $\mathfrak{r}.$ Assume that
$(\mathbf{V}_{\alpha},\|\cdot\|_{\alpha})$ is a normed space for each $\alpha \in T_D \setminus \{D\}$ 
and that $\|\cdot\|_D$ is a norm 
on the tensor space $\mathbf{V}_D = \left.
\bigotimes_{\alpha \in \mathcal{P}_{k}(T_D)} \mathbf{V}_{\alpha}\right.$ such that \eqref{tree_injective_norm} holds for $1\le k \le \depth(T_D).$ Then for each $\mathbf{v} \in \mathcal{FT}_{\mathfrak{r}}(\mathbf{V}_D,T_D)$
we have that   
$$
\bigcap_{k=1}^{\depth(T_D)}\xi_{\mathbf{v}}^{(k)}(\mathcal{U}^{(k)}(\mathbf{v})) = \mathbf{E} \times \mathcal{O}
$$
is an open set of the Banach space $\mathbf{E} \times \left. \bigotimes_{\delta \in \mathcal{P}_{1}(T_D)} \mathbf{U}_{\delta} \right..$
\end{lemma}
\begin{proof}
First we claim that $\mathcal{O}$ is an open set in $\left. \bigotimes_{\delta \in \mathcal{P}_{1}(T_D)} \mathbf{U}_{\delta} \right..$
To prove the claim, recall that $\mathfrak{M}_{\mathbf{r}_k}\left(
 \left. \bigotimes_{\alpha \in \mathcal{P}_{k}(T_D)} \mathbf{U}_{\alpha} \right.
 \right)$ is an open set in the finite dimensional space $\left. \bigotimes_{\delta \in \mathcal{P}_{k}(T_D)} \mathbf{U}_{\delta} \right.$ for $1 \le k \le \depth(T_D).$ By using Remark~\ref{general_chain} 
we have
$$
\bigcap_{k=1}^{\depth(T_D)} \left. \bigotimes_{\alpha \in \mathcal{P}_{k}(T_D)}\mathbf{U}_{\alpha} \right.
= \bigcap_{k=1}^{\depth(T_D)} \left. \bigotimes_{\alpha \in \mathcal{P}_{k}(T_D)}U_{\alpha}^{\min}(\mathbf{v}) \right.=  \left. \bigotimes_{\delta \in \mathcal{P}_{1}(T_D)} U_{\delta}^{\min}(\mathbf{v}) \right. = \left. \bigotimes_{\delta \in \mathcal{P}_{1}(T_D)} \mathbf{U}_{\delta} \right..
$$
Now, put $\ell = \depth(T_D)$ and consider
$$
\mathcal{O}_{\ell,\ell-1}:= \mathfrak{M}_{\mathfrak{r}_{\ell}}\left( \left. \bigotimes_{\alpha \in \mathcal{P}_{\ell}(T_D)}\mathbf{U}_{\alpha} \right.\right) \cap \mathfrak{M}_{\mathfrak{r}_{\ell-1}}\left( \left. \bigotimes_{\alpha \in \mathcal{P}_{\ell-1}(T_D)}\mathbf{U}_{\alpha} \right.\right)
$$
which is equal to
$$
\mathcal{O}_{\ell,\ell-1} = \left(\mathfrak{M}_{\mathfrak{r}_{\ell}}\left( \left. \bigotimes_{\alpha \in \mathcal{P}_{\ell}(T_D)}\mathbf{U}_{\alpha} \right.\right) \cap \left. \bigotimes_{\alpha \in \mathcal{P}_{\ell -1}(T_D)}\mathbf{U}_{\alpha} \right. \right) \cap \mathfrak{M}_{\mathfrak{r}_{\ell-1}}\left( \left. \bigotimes_{\alpha \in \mathcal{P}_{\ell-1}(T_D)}\mathbf{U}_{\alpha} \right.\right),
$$
where 
$$
\mathfrak{M}_{\mathfrak{r}_{\ell}}\left( \left. \bigotimes_{\alpha \in \mathcal{P}_{\ell}(T_D)}\mathbf{U}_{\alpha} \right.\right) \cap \left. \bigotimes_{\alpha \in \mathcal{P}_{\ell -1}(T_D)}\mathbf{U}_{\alpha} \right.
$$
is an open set in $\left. \bigotimes_{\alpha \in \mathcal{P}_{\ell -1}(T_D)}\mathbf{U}_{\alpha} \right. \subset \left. \bigotimes_{\alpha \in \mathcal{P}_{\ell}(T_D)}\mathbf{U}_{\alpha} \right..$ Next, let
\begin{align*}
\mathcal{O}_{\ell,\ell-2} & = \mathcal{O}_{\ell,\ell-1} \cap \mathfrak{M}_{\mathfrak{r}_{\ell-2}}\left( \left. \bigotimes_{\alpha \in \mathcal{P}_{\ell-2}(T_D)}\mathbf{U}_{\alpha} \right.\right) \\
& = \left(\mathcal{O}_{\ell,\ell-1} \cap \left. \bigotimes_{\alpha \in \mathcal{P}_{\ell-2}(T_D)}\mathbf{U}_{\alpha} \right.\right) \cap \mathfrak{M}_{\mathfrak{r}_{\ell-2}}\left( \left. \bigotimes_{\alpha \in \mathcal{P}_{\ell-2}(T_D)}\mathbf{U}_{\alpha} \right.\right).
\end{align*}
In a similar way as above, $\mathcal{O}_{\ell,\ell-2}$ is an open set in $\left. \bigotimes_{\alpha \in \mathcal{P}_{\ell-2}(T_D)}\mathbf{U}_{\alpha} \right..$ By induction, we prove that
$\mathcal{O}= \mathcal{O}_{\ell,1}$ is an open set in $\left. \bigotimes_{\delta \in \mathcal{P}_{1}(T_D)} U_{\delta}^{\min}(\mathbf{v}) \right.$ and the claim follows. To conclude, from Lemma~\ref{G1}, $\mathbf{E}_k$ is a closed linear space of $\mathcal{L}(\mathbf{V}_{D_{\|\cdot\|_D}},\mathbf{V}_{D_{\|\cdot\|_D}})$ for $1 \le k \le \depth(T_D).$ Hence $\mathbf{E}:= \bigcap_{k=1}^{\depth(T_D)}\mathbf{E}_k$ is a linear closed subspace in the Banach space $\mathcal{L}(\mathbf{V}_{D_{\|\cdot\|_D}},\mathbf{V}_{D_{\|\cdot\|_D}}).$ Thus, $\mathbf{E}$ is also a Banach space. 
Since $\mathbf{E} \times \mathcal{O}$ is an open set in the Banach space $\mathbf{E} \times \left. \bigotimes_{\delta \in \mathcal{P}_{1}(T_D)} \mathbf{U}_{\delta} \right.$ the lemma follows.
\end{proof}

\bigskip

Given $L \in \mathbf{E}$, for each $1 \le k \le \depth(T)$ there exists a unique 
$$
(L_{\alpha}^{(k)})_{\alpha \in \mathcal{P}_{k}(T_D)} \in 
\mathop{\mathchoice{\raise-0.22em\hbox{\huge
 $\times$}} {\raise-0.05em\hbox{\Large $\times$}}{\hbox{\large
 $\times$}}{\times}}_{\alpha\in \mathcal{P}_{k}(T_D)}\mathcal{L}(\mathbf{U}_{\alpha},\mathbf{W}_{\alpha})
$$
such that
$$
L = \Delta((L_{\alpha}^{(k)})_{\alpha \in \mathcal{P}_{k}(T_D)}) = \sum_{\alpha \in \mathcal{P}_{k}(T_D)} L_{\alpha}^{(k)} \otimes \mathbf{id}_{[\alpha]}
$$
holds. From \eqref{igualdad_fundamental}, each $(L,\mathbf{u}) \in \mathbf{E} \times \mathcal{O}$ satisfies that
$$
(\xi_{\mathbf{v}}^{(k)})^{-1}((L,\mathbf{u})) = \exp(L)(\mathbf{u}) \in \mathcal{U}^{(k)}(\mathbf{v})
$$
for $1 \le k \le \depth(T_D).$ Hence the image of $(L,\mathbf{u})$ by
$(\xi_{\mathbf{v}}^{(k)})^{-1}$ is independent on
the index $k.$ Thus
$(\xi_{\mathbf{v}}^{(k)})^{-1}$ is a bijection that maps
$\mathbf{E} \times \mathcal{O}$ onto a subset
$\mathcal{W}(\mathbf{v}) \subset \bigcap_{l=1}^{\depth(T_D)}\mathcal{U}^{(l)}(\mathbf{v})$ containing $\mathbf{v}$ for each $1 \le k \le \depth(T_D).$ It allows to we define the bijection
$$
\boldsymbol{\xi}_{\mathbf{v}}:\mathcal{W}(\mathbf{v}) \longrightarrow \mathbf{E} \times \mathcal{O}
$$
by $\boldsymbol{\xi}_{\mathbf{v}}(\mathbf{w}) = \xi_{\mathbf{v}}^{(k)}(\exp(L)(\mathbf{u})) = (L,\mathbf{u}).$

\bigskip

Then the following result is straightforward.

\begin{theorem}
\label{TBF_Banach_Manifold} 
Let $T_D$ be a dimension partition tree with
$\depth(T_D) \ge 2,$ and $\mathfrak{r}=(r_{\alpha})_{\alpha \in T_D} 
\in  \mathcal{AD}(\mathbf{V}_D,T_D)$ such that $\mathcal{FT}_{\mathfrak{r}}(\mathbf{V}_D,T_D)$ is a
proper set of tree-based tensors with a fixed tree-based rank $\mathfrak{r}.$ Assume that
$(\mathbf{V}_{\alpha},\|\cdot\|_{\alpha})$ is a normed space for each $\alpha \in T_D \setminus \{D\}$ 
and that $\|\cdot\|_D$ is a norm 
on the tensor space $\mathbf{V}_D = \left.
\bigotimes_{\alpha \in \mathcal{P}_{k}(T_D)} \mathbf{V}_{\alpha}\right.$ is such that \eqref{tree_injective_norm} holds for $1\le k \le \depth(T_D).$ Then the collection
$$\mathcal{B} = \{(\mathcal{W}(\mathbf{v}),\boldsymbol{\xi}_{\mathbf{v}})\}_{\mathbf{v} \in \mathcal{FT}_{\mathfrak{r}}(\mathbf{V}_{D},T_D)}$$ is a  $\mathcal{C}^{\infty}$-atlas for $\mathcal{FT}_{\mathfrak{r}}(%
\mathbf{V}_{D},T_D)$,
and hence  $\mathcal{FT}_{\mathfrak{r}}(%
\mathbf{V}_{D},T_D)$  is a $\mathcal{C}^{\infty}$-Banach manifold modelled on
$$
\mathbf{E} \times \mathbb{R}^{
\bigtimes_{\alpha \in \mathcal{P}_{1}(T_D)} r_{\alpha}}.
$$
Here
$ \mathbf{U}_{\alpha}$ is a $r_{\alpha}$-dimensional subspace of $\mathbf{V}_{\alpha}$ for each $\alpha \in T_D \setminus \{D\}$ where $\mathbf{v} \in  \left. \bigotimes_{\alpha \in \mathcal{P}_k(T_D)}\mathbf{U}_{\alpha} \right.$ for $1 \le k \le \depth(T_D)$ and $\mathbf{W}_\alpha$ is a closed subspace of $\mathbf{V}_{\alpha_{\|\cdot\|_{\alpha}}}$ such that $\mathbf{V}_{\alpha_{\|\cdot\|_{\alpha}}} = \mathbf{U}_{\alpha} \oplus \mathbf{W}_{\alpha} ,$ where $\mathbf{V}_{{\alpha}_{\|\cdot\|_{\alpha}}}$ is the completion of
$\mathbf{V}_{\alpha}$ for $\alpha \in T_D \setminus \{D\}.$
\end{theorem}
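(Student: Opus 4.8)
The plan is to verify directly that $\mathcal{B}$ satisfies the defining axioms of a $\mathcal{C}^\infty$-atlas, since the bijections $\boldsymbol{\xi}_{\mathbf{v}}$, their domains $\mathcal{W}(\mathbf{v})$ and their common codomain $\mathbf{E}(\mathbf{v}) \times \mathcal{O}(\mathbf{v})$ have already been constructed in the discussion preceding the statement. Three things remain to be checked: that the charts cover $\mathcal{FT}_{\mathfrak{r}}(\mathbf{V}_D,T_D)$, that each $\boldsymbol{\xi}_{\mathbf{v}}$ is a homeomorphism onto an open subset of the model space $\mathbf{E}\times \mathbb{R}^{\bigtimes_{\alpha\in\mathcal{P}_1(T_D)} r_\alpha}$, and that the transition maps $\boldsymbol{\xi}_{\mathbf{v}'}\circ\boldsymbol{\xi}_{\mathbf{v}}^{-1}$ are of class $\mathcal{C}^\infty$. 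The essential observation throughout is that, by the $k$-independence established via \eqref{igualdad_fundamental} and Proposition~\ref{fundamental_property}, each chart $\boldsymbol{\xi}_{\mathbf{v}}$ is precisely the common restriction to $\mathcal{FT}_{\mathfrak{r}}(\mathbf{V}_D,T_D)$ of the Tucker charts $\xi_{\mathbf{v}}^{(k)}$, $1\le k\le \depth(T_D)$, all of which agree there; this reduces every claim to a known property of the Tucker atlas of Theorem~\ref{Tucker_Banach_ManifoldXXX}.

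For the covering property I would take $(L,\mathbf{u})=(0,\mathbf{v})\in \mathbf{E}(\mathbf{v})\times\mathcal{O}(\mathbf{v})$, so that $\exp(0)(\mathbf{v})=\mathbf{v}$ and hence $\mathbf{v}\in\mathcal{W}(\mathbf{v})$; here one uses that $\mathbf{v}\in\mathcal{O}(\mathbf{v})$ because, by definition of its minimal subspaces, $\mathbf{v}$ realises full Tucker rank $\mathfrak{r}_k$ inside $\bigotimes_{\alpha\in\mathcal{P}_k(T_D)}\mathbf{U}_\alpha$ for every $k$. The codomain is open in the model: $\mathbf{E}(\mathbf{v})$ is the full closed model factor, while $\mathcal{O}(\mathbf{v})$ is open in the finite-dimensional space $\bigotimes_{\delta\in\mathcal{P}_1(T_D)}\mathbf{U}_\delta$, being the finite intersection \eqref{core_tensor_set} of the sets $\mathfrak{M}_{\mathfrak{r}_k}(\bigotimes_{\alpha\in\mathcal{P}_k(T_D)}\mathbf{U}_\alpha)$, each of which is open since maximality of the multilinear rank is an open condition. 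Equipping $\mathcal{FT}_{\mathfrak{r}}(\mathbf{V}_D,T_D)$ with the topology induced from $\mathfrak{M}_{\mathfrak{r}_k}(\mathbf{V}_D,\mathcal{P}_k(T_D))$, which by Theorem~\ref{Char_TBF} and the $k$-independence is the same for every $k$ and is Hausdorff (being induced from that of $\mathbf{V}_{D_{\|\cdot\|_D}}$), each $\boldsymbol{\xi}_{\mathbf{v}}$ is a homeomorphism onto its image because it is the restriction of the homeomorphism $\xi_{\mathbf{v}}^{(k)}$ to $\mathcal{W}(\mathbf{v})\subset \mathcal{U}^{(k)}(\mathbf{v})$.

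The heart of the argument is the smoothness of the transition maps. Fix $\mathbf{v},\mathbf{v}'$ with $\mathcal{W}(\mathbf{v})\cap\mathcal{W}(\mathbf{v}')\neq\emptyset$ and fix any level $k$. Since on $\mathcal{FT}_{\mathfrak{r}}$ the chart $\boldsymbol{\xi}_{\mathbf{v}'}$ agrees with $\xi_{\mathbf{v}'}^{(k)}$ and $\boldsymbol{\xi}_{\mathbf{v}}^{-1}$ agrees with $(\xi_{\mathbf{v}}^{(k)})^{-1}$, the composite $\boldsymbol{\xi}_{\mathbf{v}'}\circ\boldsymbol{\xi}_{\mathbf{v}}^{-1}$ is exactly the restriction of the Tucker transition map $\xi_{\mathbf{v}'}^{(k)}\circ(\xi_{\mathbf{v}}^{(k)})^{-1}$ to the open set $\boldsymbol{\xi}_{\mathbf{v}}(\mathcal{W}(\mathbf{v})\cap\mathcal{W}(\mathbf{v}'))\subset \mathbf{E}(\mathbf{v})\times\mathcal{O}(\mathbf{v})$, this last set being open because $\mathcal{W}(\mathbf{v})\cap\mathcal{W}(\mathbf{v}')$ is open in $\mathcal{W}(\mathbf{v})$ and $\boldsymbol{\xi}_{\mathbf{v}}$ is a homeomorphism. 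By Theorem~\ref{Tucker_Banach_ManifoldXXX} this Tucker transition map is $\mathcal{C}^\infty$ between open subsets of $\mathbf{E}_k(\mathbf{v})\times\mathbb{R}^{\bigtimes_{\alpha\in\mathcal{P}_k(T_D)}r_\alpha}$ and $\mathbf{E}_k(\mathbf{v}')\times\mathbb{R}^{\bigtimes_{\alpha\in\mathcal{P}_k(T_D)}r_\alpha}$. Precomposing with the bounded linear inclusion $\mathbf{E}(\mathbf{v})\hookrightarrow\mathbf{E}_k(\mathbf{v})$ (and with the identity on the finite-dimensional factor) shows that the restriction is $\mathcal{C}^\infty$ into the ambient Tucker model.

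It remains to co-restrict the target. The image of the transition map lies in $\mathbf{E}(\mathbf{v}')\times\mathcal{O}(\mathbf{v}')$, again because $\boldsymbol{\xi}_{\mathbf{v}'}$ takes values in $\mathbf{E}(\mathbf{v}')=\bigcap_k\mathbf{E}_k(\mathbf{v}')$ independently of $k$. Since $\mathbf{E}(\mathbf{v}')$ is a closed subspace of $\mathbf{E}_k(\mathbf{v}')$, a $\mathcal{C}^\infty$ map valued in the ambient space whose image lies in this closed subspace is automatically $\mathcal{C}^\infty$ into the subspace: every Fréchet derivative is a limit of difference quotients lying in $\mathbf{E}(\mathbf{v}')$, hence is a continuous multilinear map into the closed subspace, and these depend continuously on the base point in the subspace norm. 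This yields that $\boldsymbol{\xi}_{\mathbf{v}'}\circ\boldsymbol{\xi}_{\mathbf{v}}^{-1}$ is $\mathcal{C}^\infty$ between the model spaces, and interchanging $\mathbf{v}$ and $\mathbf{v}'$ gives a $\mathcal{C}^\infty$ inverse, so the transitions are $\mathcal{C}^\infty$-diffeomorphisms. The step I expect to require the most care is precisely this passage to the intersection $\mathbf{E}(\mathbf{v})=\bigcap_k\mathbf{E}_k(\mathbf{v})$: one must invoke the $k$-independence of $\boldsymbol{\xi}_{\mathbf{v}}$ to identify the transition with a single Tucker transition map (rather than an $k$-dependent family), and the closedness of $\mathbf{E}(\mathbf{v}')$ to transfer smoothness from the ambient Tucker model to the intersection. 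Collecting the three verifications shows that $\mathcal{B}$ is a $\mathcal{C}^\infty$-atlas, so $\mathcal{FT}_{\mathfrak{r}}(\mathbf{V}_D,T_D)$ is a $\mathcal{C}^\infty$-Banach manifold modelled on $\mathbf{E}\times\mathbb{R}^{\bigtimes_{\alpha\in\mathcal{P}_1(T_D)}r_\alpha}$.
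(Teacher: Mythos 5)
Your overall strategy coincides with the one the paper intends: Theorem~\ref{TBF_Banach_Manifold} is stated immediately after the construction of $\mathbf{E}(\mathbf{v})$, $\mathcal{O}(\mathbf{v})$, $\mathcal{W}(\mathbf{v})$ and $\boldsymbol{\xi}_{\mathbf{v}}$ and is declared ``straightforward'', the point being exactly the one you exploit: $\boldsymbol{\xi}_{\mathbf{v}}$ is the common restriction of the Tucker charts $\xi_{\mathbf{v}}^{(k)}$ of Theorem~\ref{Tucker_Banach_ManifoldXXX}, so covering, bijectivity and smoothness of the transition maps are meant to be inherited from the Tucker atlases, with Lemma~\ref{G1} giving the closedness of $\mathbf{E}(\mathbf{v})$ and Proposition~\ref{fundamental_property} together with \eqref{igualdad_fundamental} giving the $k$-independence of $\exp(L)(\mathbf{u})$. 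Your treatment of the covering property, of the openness of $\mathcal{O}(\mathbf{v})$ in the finite-dimensional space $\left._a\bigotimes_{\delta\in\mathcal{P}_1(T_D)}\mathbf{U}_\delta\right.$, and of the transfer of smoothness to the closed subspace $\mathbf{E}(\mathbf{v}')$ of $\mathbf{E}_k(\mathbf{v}')$ is correct and is exactly the reduction the paper has in mind.

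There is, however, one step you assert rather than prove, and it is the only atlas axiom that does not follow formally from the Tucker case: the openness of $\boldsymbol{\xi}_{\mathbf{v}}(\mathcal{W}(\mathbf{v})\cap\mathcal{W}(\mathbf{v}'))$ in $\mathbf{E}(\mathbf{v})\times\mathcal{O}(\mathbf{v})$. You justify it by saying that $\mathcal{W}(\mathbf{v})\cap\mathcal{W}(\mathbf{v}')$ is open in $\mathcal{W}(\mathbf{v})$, but that is precisely what needs to be shown. It would follow if $\mathcal{W}(\mathbf{v}')$ were relatively open in $\mathcal{FT}_{\mathfrak{r}}(\mathbf{V}_D,T_D)$, but this cannot be deduced from the openness of $\mathcal{U}^{(k)}(\mathbf{v}')$ in $\mathfrak{M}_{\mathfrak{r}_k}(\mathbf{V}_D,\mathcal{P}_k(T_D))$: since $\mathbf{E}(\mathbf{v}')=\bigcap_k\mathbf{E}_k(\mathbf{v}')$ is a closed and in general \emph{proper} subspace of each $\mathbf{E}_k(\mathbf{v}')$, the set $\mathbf{E}(\mathbf{v}')\times\mathcal{O}(\mathbf{v}')$ is not open in $\mathbf{E}_k(\mathbf{v}')\times\mathfrak{M}_{\mathfrak{r}_k}\left(\left._a\bigotimes_{\alpha\in\mathcal{P}_k(T_D)}\mathbf{U}_\alpha'\right.\right)$, so $\mathcal{W}(\mathbf{v}')$ is not open in $\mathcal{U}^{(k)}(\mathbf{v}')$, and one only obtains the inclusion $\mathcal{W}(\mathbf{v}')\subset\mathcal{FT}_{\mathfrak{r}}(\mathbf{V}_D,T_D)\cap\bigcap_k\mathcal{U}^{(k)}(\mathbf{v}')$, which need not be an equality: a point $\mathbf{w}$ of the right-hand side has, for each $k$, a unique representation $\mathbf{w}=\exp(L^{(k)})(\mathbf{u}^{(k)})$ with $L^{(k)}\in\mathbf{E}_k(\mathbf{v}')$, but nothing in your argument forces these operators $L^{(k)}$ to coincide for different $k$, i.e.\ to define a single element of the intersection $\mathbf{E}(\mathbf{v}')$. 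What is missing is a direct argument that every point of $\mathcal{W}(\mathbf{v})\cap\mathcal{W}(\mathbf{v}')$ has a neighbourhood in $\mathcal{W}(\mathbf{v})$, for the topology transported by $\boldsymbol{\xi}_{\mathbf{v}}$, that is still contained in $\mathcal{W}(\mathbf{v}')$; without it the transition maps are smooth maps on sets that have not been shown to be open, and the atlas axioms are not verified.
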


\subsection{$\mathcal{FT}_{\mathfrak{r}}(%
\mathbf{V}_{D},T_D)$ as embedded sub-manifold of $\mathfrak{M}_{\mathfrak{r}_{{k}}}(%
\mathbf{V}_{D},\mathcal{P}_k(T_D))$ for $1 \le k \le \depth(T_D)$ }

Since $\mathcal{FT}_{\mathfrak{r}}(%
\mathbf{V}_{D},T_D) \subset \mathfrak{M}_{\mathfrak{r}_{{k}}}(%
\mathbf{V}_{D},\mathcal{P}_k(T_D)),$ for $1 \le k \le \depth(T_D),$ the natural ambient space 
of the manifold $\mathcal{FT}_{\mathfrak{r}}(%
\mathbf{V}_{D},T_D)$ is any manifold $\mathfrak{M}_{\mathfrak{r}_{{k}}}(%
\mathbf{V}_{D},\mathcal{P}_k(T_D))$ for $1 \le k \le \depth(T_D).$ 
In order to prove that $\mathcal{FT}_{\mathfrak{r}}(%
\mathbf{V}_{D},T_D)$ is an embedded sub-manifold of $\mathfrak{M}_{\mathfrak{r}_{{k}}}(%
\mathbf{V}_{D},\mathcal{P}_k(T_D))$ for $1 \le k \le \depth(T_D)$, we consider
the natural inclusion map $\mathfrak{i}: \mathcal{FT}_{\mathfrak{r}}(%
\mathbf{V}_{D},T_D)\longrightarrow \mathfrak{M}_{\mathfrak{r}_{{k}}}(%
\mathbf{V}_{D},\mathcal{P}_k(T_D))$ given by $\mathfrak{i}(\mathbf{v}) = \mathbf{v}.$ 
Then, from Theorem 3.5.7 in \cite{Lang}, we only 
need to check the following two conditions for each $1 \le k \le \depth(T_D):$ 
\begin{enumerate}
    \item[(C1)] The map $\mathfrak{i}$ should be an immersion. From Proposition 4.1 in \cite{FHN}, it is true when
    the linear map $$\mathrm{T}_{\mathbf{v}}\mathfrak{i} = (\xi_{\mathbf{v}} \circ \mathfrak{i} \circ \boldsymbol{\xi}_{\mathbf{v}}^{-1})'(\boldsymbol{\xi}_{\mathbf{v}}(\mathbf{v})):\mathrm{T}_{\mathbf{v}}\mathcal{FT}_{\mathfrak{r}}(%
    \mathbf{V}_{D},T_D) \longrightarrow \mathrm{T}_{\mathbf{v}} \mathfrak{M}_{\mathfrak{%
    r}_{k}}(\mathbf{V}_{D},\mathcal{P}_k(T_D))$$ is injective and
$\mathrm{T}_{\mathbf{v}}\mathfrak{i}(\mathrm{T}_{\mathbf{v}}\mathcal{FT}_{\mathfrak{r}}(%
\mathbf{V}_{D},T_D)) \in \mathbb{G}\left( \mathrm{T}_{\mathbf{v}} \mathfrak{M}_{\mathfrak{%
r}_{k}}(\mathbf{V}_{D},\mathcal{P}_k(T_D))\right)$ 
\item[(C2)]The map
$$
\mathfrak{i}: \mathcal{FT}_{\mathfrak{r}}(%
\mathbf{V}_{D},T_D) \longrightarrow \mathfrak{i}\left( \mathcal{FT}_{\mathfrak{r}}(%
\mathbf{V}_{D},T_D) \right)
$$
is a topological homeomorphism.
\end{enumerate}

Since $\mathfrak{i}:\mathcal{FT}_{\mathfrak{r}}(%
\mathbf{V}_{D},T_D) \longrightarrow \mathfrak{i}\left( \mathcal{FT}_{\mathfrak{r}}(%
\mathbf{V}_{D},T_D) \right)$ is the identity map then it is clearly an homeomorphism and (C2) holds. To prove that (C1) is also true, first we claim that the natural inclusion map $\mathfrak{i}$ is also written 
in local coordinates as the natural inclusion map.
Indeed, for $\mathbf{v} \in \mathcal{FT}_{\mathfrak{r}}(%
\mathbf{V}_{D},T_D),$ the open set $\mathcal{W}(\mathbf{v}) \subset \bigcap_{\ell=1}^{\depth(T_D)} \mathcal{U}^{(\ell)}(\mathbf{v}) \subset \mathcal{FT}_{\mathfrak{r}}(%
\mathbf{V}_{D},T_D)$ and hence
$$
\mathfrak{i}:\mathcal{W}(\mathbf{v}) \longrightarrow
\mathcal{U}^{(k)}(\mathbf{v})
$$
is the identity map on $\mathcal{W}(\mathbf{v})$, that is, $\mathfrak{i}|_{\mathcal{W}(\mathbf{v})}= id_{\mathcal{W}(\mathbf{v})}.$ Thus
$$
(\xi_{\mathbf{v}}^{(k)} \circ \mathfrak{i} \circ \boldsymbol{\xi}^{-1}_{\mathbf{v}}):
\mathbf{E} \times \mathcal{O} \longrightarrow
\mathbf{E}_k \times \mathfrak{M}_{\mathfrak{r}_{k}}\left(
\left. \bigotimes_{\alpha \in \mathcal{P}_{k}(T_D)} \mathbf{U}_{\alpha} \right.
\right)
$$
is the natural inclusion map and the claim follows. Hence its derivative
$$
\mathrm{T}_{\mathbf{v}}\mathfrak{i} = (\xi_{\mathbf{v}} \circ \mathfrak{i} \circ \boldsymbol{\xi}_{\mathbf{v}}^{-1})'(\boldsymbol{\xi}_{\mathbf{v}}(\mathbf{v})):
\mathbf{E} \times \left(\left. \bigotimes_{\alpha \in \mathcal{P}_{1}(T_D)}\mathbf{U}_{\alpha} \right.\right) \longrightarrow
\mathbf{E}_k \times \left(
\left. \bigotimes_{\alpha \in \mathcal{P}_{k}(T_D)} \mathbf{U}_{\alpha} \right.
\right)
$$ 
is also the natural inclusion map which is clearly injective.

\bigskip

In consequence, to obtain (C1) we only need to prove that for each
$\mathbf{v} \in \mathcal{FT}_{\mathfrak{r}}(%
\mathbf{V}_{D},T_D)$ the tangent space
$$
\mathrm{T}_{\mathbf{v}}\mathcal{FT}_{\mathfrak{r}}(%
\mathbf{V}_{D},T_D) = \mathbf{E} \times \left. \bigotimes_{\alpha \in \mathcal{P}_{1}(T_D)} \mathbf{U}_{\alpha} \right.
$$
belongs to
$$
\mathbb{G}\left(\mathbf{E}_k
\times 
 \left. \bigotimes_{\alpha \in \mathcal{P}_{k}(T_D)} \mathbf{U}_{\alpha} \right.\right)
 = \mathbb{G}\left( \mathbf{E}_k\right)
 \times \mathbb{G}\left(\left. \bigotimes_{\alpha \in \mathcal{P}_{k}(T_D)} \mathbf{U}_{\alpha} \right. \right).
$$
Clearly
$$
\left. \bigotimes_{\alpha \in \mathcal{P}_{1}(T_D)} \mathbf{U}_{\alpha} \right. 
\in \mathbb{G}\left(\left. \bigotimes_{\alpha \in \mathcal{P}_{k}(T_D)} \mathbf{U}_{\alpha} \right. \right),
$$
because $\left. \bigotimes_{\alpha \in \mathcal{P}_{k}(T_D)} \mathbf{U}_{\alpha} \right. $ is a finite dimensional vector space. From Lemma~\ref{G1} we have $$\mathbf{E}_k \in \mathbb{G}\left( \mathcal{L}(\mathbf{V}_{D_{\|\cdot\|_D}},\mathbf{V}_{D_{\|\cdot\|_D}}) \right)$$ for $1 \le k \le \depth(T_D).$  The second statement of Lemma 4.14 in \cite{FHN} implies
$$
\mathbf{E} = \bigcap_{k=1}^{\depth(T_D)}\mathbf{E}_k \in 
\mathbb{G}\left(\mathcal{L}(\mathbf{V}_{D_{\|\cdot\|_D}},\mathbf{V}_{D_{\|\cdot\|_D}}) \right).
$$
Thus we have the following theorem.

\begin{theorem}\label{submanifold}
Let $T_{D}$ be a dimension partition tree over $D$ and $\mathfrak{r} \in \mathcal{AD}(\mathbf{V}_D,T_D)$ such that $\mathcal{FT}_{\mathfrak{r}}(\mathbf{V}_D,T_D)$ is a
proper set of tree-based tensors with a fixed tree-based rank $\mathfrak{r}.$ Assume that $(\mathbf{V}_{\alpha},\|\cdot\|_{\alpha})$ is a normed space 
for each $\alpha \in T_D \setminus \{D\}$ 
and let $\|\cdot\|_D$ be a norm 
on the tensor space $\mathbf{V}_D$ such that \eqref{tree_injective_norm}
holds for $1 \le k \le \depth(T_D).$ Then $\mathcal{FT}_{\mathfrak{r}}(%
\mathbf{V}_{D},T_D)$ is an embedded sub-manifold of $\mathfrak{M}_{\mathfrak{r}_{{k}}}(%
\mathbf{V}_{D},\mathcal{P}_k(T_D))$ for $1 \le k \le \depth(T_D).$
\end{theorem}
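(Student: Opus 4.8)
The plan is to realise $\mathcal{FT}_{\mathfrak{r}}(\mathbf{V}_D,T_D)$ as an embedded sub-manifold by showing that the inclusion $\mathfrak{i}$ satisfies the two conditions (C1) and (C2) isolated above, which together are exactly the criterion for $\mathfrak{i}$ to be an embedding; the conclusion then holds for every fixed ambient manifold $\mathfrak{M}_{\mathfrak{r}_k}(\mathbf{V}_D,T_D)$, $1\le k\le \depth(T_D)$. Since all the structural ingredients have already been assembled in the preceding discussion, the proof amounts to organising them and invoking Proposition 4.1 in \cite{FHN} for the immersion part.

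First I would dispose of (C1). The tangent map $T_{\mathbf{v}}\mathfrak{i}$ was identified above, in the charts $\boldsymbol{\xi}_{\mathbf{v}}$ and $\xi^{(k)}_{\mathbf{v}}$, with the natural linear inclusion of $\mathbf{E}(\mathbf{v}) \times \left._a\bigotimes_{\alpha\in\mathcal{P}_1(T_D)}\mathbf{U}_\alpha\right.$ into $\mathbf{E}_k(\mathbf{v}) \times \left._a\bigotimes_{\alpha\in\mathcal{P}_k(T_D)}\mathbf{U}_\alpha\right.$, which is manifestly injective. It remains to see that its image is split, i.e. lies in the relevant Grassmann manifold. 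Using that membership in $\mathbb{G}$ of a product splits into the two factors, I treat them separately: the subspace $\left._a\bigotimes_{\alpha\in\mathcal{P}_1(T_D)}\mathbf{U}_\alpha\right.$ is complemented in the finite-dimensional space $\left._a\bigotimes_{\alpha\in\mathcal{P}_k(T_D)}\mathbf{U}_\alpha\right.$ for trivial dimensional reasons, while Lemma~\ref{G1} gives $\mathbf{E}_k(\mathbf{v})\in\mathbb{G}(\mathcal{L}(\mathbf{V}_{D_{\|\cdot\|_D}},\mathbf{V}_{D_{\|\cdot\|_D}}))$ for every $k$, and the second statement of Lemma~4.14 in \cite{FHN} then yields $\mathbf{E}(\mathbf{v})=\bigcap_k\mathbf{E}_k(\mathbf{v})\in\mathbb{G}(\mathcal{L}(\mathbf{V}_{D_{\|\cdot\|_D}},\mathbf{V}_{D_{\|\cdot\|_D}}))$. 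Restricting the ambient space to $\mathbf{E}_k(\mathbf{v})$, this is precisely $\mathbf{E}(\mathbf{v})\in\mathbb{G}(\mathbf{E}_k(\mathbf{v}))$, so the whole tangent image is split and $\mathfrak{i}$ is an immersion.

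Next I would verify (C2), namely that $\mathfrak{i}$ is a homeomorphism onto its image equipped with the topology induced from $\mathfrak{M}_{\mathfrak{r}_k}(\mathbf{V}_D,T_D)$. Here I would exploit that, by construction, every chart domain $\mathcal{W}(\mathbf{v})$ on $\mathcal{FT}_{\mathfrak{r}}(\mathbf{V}_D,T_D)$ satisfies $\mathcal{W}(\mathbf{v})\subset\bigcap_{l}\mathcal{U}^{(l)}(\mathbf{v})$ and that $\boldsymbol{\xi}_{\mathbf{v}}$ is the restriction of the ambient chart $\xi^{(k)}_{\mathbf{v}}$ to $\mathcal{W}(\mathbf{v})$. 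Consequently $\mathfrak{i}|_{\mathcal{W}(\mathbf{v})}$ is the identity read through a restriction of an ambient chart: it is continuous, and it sends the basic open sets of $\mathcal{FT}_{\mathfrak{r}}(\mathbf{V}_D,T_D)$ to sets that are relatively open in $\mathfrak{i}(\mathcal{FT}_{\mathfrak{r}}(\mathbf{V}_D,T_D))$, so $\mathfrak{i}$ is open onto its image and hence a homeomorphism onto it. Since the finite-dimensional factor is common to both models and only the operator factor $\mathbf{E}(\mathbf{v})\subset\mathbf{E}_k(\mathbf{v})$ varies with $k$, this argument is uniform in $k$, as the statement requires.

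The step I expect to carry the real weight is the complementedness assertion inside (C1): in a general Banach space the intersection of split subspaces need not be split, so the passage from the individual $\mathbf{E}_k(\mathbf{v})\in\mathbb{G}(\mathcal{L})$ to $\bigcap_k\mathbf{E}_k(\mathbf{v})\in\mathbb{G}(\mathcal{L})$ is a genuinely infinite-dimensional point, not a formality. It is exactly here that the norm hypothesis \eqref{tree_injective_norm} (through Lemma~\ref{G1}) and the projection structure underlying Lemma~4.14 in \cite{FHN} are indispensable; once the charts on $\mathcal{FT}_{\mathfrak{r}}(\mathbf{V}_D,T_D)$ are recognised as restrictions of the Tucker charts, everything else, including (C2), reduces to bookkeeping.
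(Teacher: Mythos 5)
Your proposal is correct and follows essentially the same route as the paper: identify $\mathrm{T}_{\mathbf{v}}\mathfrak{i}$ in the charts $\boldsymbol{\xi}_{\mathbf{v}}$, $\xi^{(k)}_{\mathbf{v}}$ with the natural inclusion, dispose of the finite-dimensional factor by dimension, and obtain the splitting of $\mathbf{E}(\mathbf{v})=\bigcap_k\mathbf{E}_k(\mathbf{v})$ from Lemma~\ref{G1} together with the second statement of Lemma~4.14 in \cite{FHN}, with (C2) following from the fact that $\boldsymbol{\xi}_{\mathbf{v}}$ is a restriction of the ambient Tucker chart. Your explicit remarks on restricting the ambient space from $\mathcal{L}(\mathbf{V}_{D_{\|\cdot\|_D}},\mathbf{V}_{D_{\|\cdot\|_D}})$ down to $\mathbf{E}_k(\mathbf{v})$ and on verifying (C2) are slightly more detailed than the paper's text, but they do not change the argument.
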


\bigskip

Observe that we can also consider the natural inclusion map $\mathfrak{i}$ from $\mathfrak{M}_{\mathfrak{r}_{{k}}}(%
\mathbf{V}_{D},\mathcal{P}_k(T_D))$ to $\mathbf{V}_{D_{\|\cdot\|_D}}.$  Under the assumptions of Theorem~\ref{submanifold},
by using Theorem 4.14 of \cite{FHN}, we have that $\mathfrak{M}_{\mathfrak{r}_{{k}}}(%
\mathbf{V}_{D},\mathcal{P}_k(T_D))$ is an immersed sub-manifold of $\mathbf{V}_{D_{\|\cdot\|_D}}$ and, for each $\mathbf{v} \in \mathfrak{M}_{\mathfrak{r}_{{k}}}(%
\mathbf{V}_{D},\mathcal{P}_k(T_D)),$ the tangent
space $$\mathrm{T}_{\mathbf{v}}\mathfrak{M}_{\mathfrak{r}_{{k}}}(%
\mathbf{V}_{D},\mathcal{P}_k(T_D)) = \mathbf{E}_k \times \left(\left. \bigotimes_{\alpha \in  \mathcal{P}_k(T_D)}\mathbf{U}_{\alpha} \right. \right)$$ is linearly isomorphic to the linear space
$
\mathrm{T}_{\mathbf{v}}\mathfrak{i}\left( 
\mathrm{T}_{\mathbf{v}}\mathfrak{M}_{\mathfrak{r}_{{k}}}(%
\mathbf{V}_{D},\mathcal{P}_k(T_D))
\right) \in \mathbb{G}(\mathbf{V}_{D_{\|\cdot\|_D}}).
$
Moreover,
\begin{align*}
\bigcap_{k=1}^{\depth(T_D)}\mathrm{T}_{\mathbf{v}}\mathfrak{M}_{\mathfrak{r}_{{k}}}(%
\mathbf{V}_{D},\mathcal{P}_k(T_D)) & = \bigcap_{k=1}^{\depth(T_D)}\left( \mathbf{E}_k \times  \left(\left. \bigotimes_{\alpha \in \mathcal{P}_{k}(T_D)} \mathbf{U}_{\alpha} \right.\right)\right) \\ 
& = \left( \bigcap_{k=1}^{\depth(T_D)}\mathbf{E}_k \right) \times  \left(\bigcap_{k=1}^{\depth(T_D)} \left. \bigotimes_{\alpha \in \mathcal{P}_{k}(T_D)} \mathbf{U}_{\alpha} \right.\right) \\ 
& = \mathbf{E} \times \left(\left. \bigotimes_{\alpha \in \mathcal{P}_{1}(T_D)} \mathbf{U}_{\alpha} \right.\right) = \mathrm{T}_{\mathbf{v}}\mathcal{FT}_{\mathfrak{r}}(%
\mathbf{V}_{D},T_D).
\end{align*}
Then, by using that $\mathrm{T}_{\mathbf{v}}\mathfrak{i}$ is injective, we obtain 
\begin{align*}
\mathrm{T}_{\mathbf{v}}\mathfrak{i} \left(
\mathrm{T}_{\mathbf{v}}\mathcal{FT}_{\mathfrak{r}}(%
\mathbf{V}_{D},T_D) \right) & = \mathrm{T}_{\mathbf{v}}\mathfrak{i} \left(\bigcap_{k=1}^{\depth(T_D)}\mathrm{T}_{\mathbf{v}}\mathfrak{M}_{\mathfrak{r}_{{k}}}(%
\mathbf{V}_{D},\mathcal{P}_k(T_D))\right) \\ 
& = \bigcap_{k=1}^{\depth(T_D)} \mathrm{T}_{\mathbf{v}}\mathfrak{i} \left(\mathrm{T}_{\mathbf{v}}\mathfrak{M}_{\mathfrak{r}_{{k}}}(%
\mathbf{V}_{D},\mathcal{P}_k(T_D))\right)  \in \mathbb{G}(\mathbf{V}_{D_{\|\cdot\|_D}}),
\end{align*}
also by Lemma 4.14 in \cite{FHN}, and it is linearly isomorphic 
to $\mathrm{T}_{\mathbf{v}}\mathcal{FT}_{\mathfrak{r}}(%
\mathbf{V}_{D},T_D).$  Thus, also $\mathcal{FT}_{\mathfrak{r}}(%
\mathbf{V}_{D},T_D)$ is an immersed sub-manifold of $\mathbf{V}_{D_{\|\cdot\|_D}}.$ Hence we have the following
result.

\begin{corollary}\label{immersed1}
Let $T_{D}$ be a dimension partition tree over $D$ and $\mathfrak{r} \in \mathcal{AD}(\mathbf{V}_D,T_D)$ such that $\mathcal{FT}_{\mathfrak{r}}(\mathbf{V}_D,T_D)$ is a
proper set of tree-based tensors with a fixed tree-based rank $\mathfrak{r}.$ Assume that $(\mathbf{V}_{\alpha},\|\cdot\|_{\alpha})$ is a normed space 
for each $\alpha \in T_D \setminus \{D\}$ 
and let $\|\cdot\|_D$ be a norm 
on the tensor space $\mathbf{V}_D$ such that \eqref{tree_injective_norm}
holds for $1 \le k \le \depth(T_D).$ Then $\mathcal{FT}_{\mathfrak{r}}(%
\mathbf{V}_{D},T_D)$ is an immersed sub-manifold of $\mathbf{V}_{D_{\|\cdot\|_D}}.$ 
\end{corollary}

\subsection{On the Dirac–Frenkel Variational Principle}

To extend Dirac–Frenkel Variational Principle for a proper set of tree-based tensors with a fixed tree-based rank,  
we consider the abstract ordinary differential equation in a
reflexive tensor Banach space $\mathbf{V}_{D_{\Vert \cdot \Vert _{D}}} = \overline{\mathbf{V}_D}^{\|\cdot\|_D},$
given by 
\begin{align}
\dot{\mathbf{u}}(t)& =\mathbf{F}(t,\mathbf{u}(t)),\text{ for }t\geq 0,
\label{BODE1} \\
\mathbf{u}(0)& =\mathbf{u}_{0},  \label{BODE2}
\end{align}%
where we assume $\mathbf{u}_{0}\neq \mathbf{0}$ and $\mathbf{F}:[0,\infty
)\times \mathbf{V}_{D_{\Vert \cdot \Vert _{D}}}\longrightarrow \mathbf{V}%
_{D_{\Vert \cdot \Vert _{D}}}$ satisfying the usual conditions 
to have existence and uniqueness of solutions. Let $T_{D}$ be a dimension partition tree over $D$ and $\mathfrak{r} \in \mathcal{AD}(\mathbf{V}_D,T_D)$ such that $\mathcal{FT}_{\mathfrak{r}}(\mathbf{V}_D,T_D)$ is a
proper set of tree-based tensors with a fixed tree-based rank $\mathfrak{r}.$ Assume that $(\mathbf{V}_{\alpha},\|\cdot\|_{\alpha})$ is a normed space 
for each $\alpha \in T_D \setminus \{D\}$ 
and let $\|\cdot\|_D$ be a norm 
on the tensor space $\mathbf{V}_D$ such that \eqref{tree_injective_norm}
holds for $1 \le k \le \depth(T_D).$ 

\bigskip

We want to approximate $%
\mathbf{u}(t),$ for $t\in I:=(0,T )$ for some $T >0,$ by
a differentiable curve $t\mapsto \mathbf{v}_{r}(t)$ from $I$ to
$\mathcal{FT}_{\mathfrak{r}}(\mathbf{V}_D,T_D),$ where $\mathfrak{r}\in \mathcal{AD}(\mathbf{V}_D,T_D)$
$(\mathfrak{r} \neq \mathbf{0}),$ such that $\mathbf{v}_{r}(0)=\mathbf{v}_{0}\in \mathcal{FT}_{\mathfrak{r}}(\mathbf{V}_D,T_D)$ is an approximation of $\mathbf{u}_{0}.$

\bigskip

To construct a reduced order model of \eqref{BODE1}--\eqref{BODE2} over 
the Banach manifold $\mathcal{FT}_{\mathfrak{r}}(\mathbf{V}_D,T_D)$ we consider
the natural inclusion map
$$
\mathfrak{i}:\mathcal{FT}_{\mathfrak{r}}(%
\mathbf{V}_{D},T_D) \longrightarrow \mathbf{V}_{D_{\Vert \cdot \Vert _{D}}}.
$$ 
Since $\mathcal{FT}_{\mathfrak{r}}(%
\mathbf{V}_{D},T_D)$ is an immersed sub-manifold in $\mathbf{V}_{D_{\|\cdot\|_D}},$  
from Theorem 3.5.7 in \cite{Lang}, we have
$$
\mathrm{T}_{\mathbf{v}}\mathfrak{i} \left(
\mathrm{T}_{\mathbf{v}}\mathcal{FT}_{\mathfrak{r}}(%
\mathbf{V}_{D},T_D) \right) \in \mathbb{G}(\mathbf{V}_{D_{\|\cdot\|_D}}).
$$
By using that $\mathbf{F}(t,\mathbf{v}_{r}(t))\in \mathbf{V}_{D_{\Vert
\cdot \Vert _{D}}},$ for each $t\in I,$ together the fact that
$$\mathbf{Z}^{(D)}(\mathbf{v}_{r}(t)):= \mathrm{T}_{\mathbf{v}_{r}(t)}\mathfrak{i} \left(
\mathrm{T}_{\mathbf{v}_{r}(t)}\mathcal{FT}_{\mathfrak{r}}(\mathbf{V}_{D},T_D) \right)$$ 
is a closed linear subspace in $\mathbf{V}_{D_{\Vert \cdot \Vert
_{D}}},$ we have the existence of a $\dot{\mathbf{v}}_{r}(t)\in \mathbf{Z}%
^{(D)}(\mathbf{v}_{r}(t))$ such that 
\begin{equation}\label{DF}
\Vert \dot{\mathbf{v}}_{r}(t)-\mathbf{F}(t,\mathbf{v}_{r}(t))\Vert
_{D}=\min_{\dot{\mathbf{v}}\in \mathbf{Z}^{(D)}(\mathbf{v}_{r}(t))}\Vert 
\dot{\mathbf{v}}-\mathbf{F}(t,\mathbf{v}_{r}(t))\Vert _{D}.
\end{equation}
Equation \eqref{DF} extends the variational principle of Dirac-Frenkel
to the Banach manifold $\mathcal{FT}_{\mathfrak{r}}(\mathbf{V}_{D},T_D).$

\bigskip

\textbf{Acknowledgements} This research was funded by the RTI2018-093521-B-C32 grant from the Ministerio de Ciencia, Innovación y Universidades and by the grant number INDI22/15 from Universidad CEU Cardenal Herrera.

\end{document}